\documentclass[a4paper,10pt]{scrartcl}
\pdfoutput=1
\usepackage{ifpdf}
\usepackage[english]{babel}
\usepackage[utf8]{inputenc}
\usepackage[T1]{fontenc}
\usepackage{lmodern}
\usepackage{enumerate}
\usepackage{geometry}
\usepackage{amsfonts,amsmath,amssymb,amsopn}
\usepackage{amsthm}
\usepackage{mathtools}
\usepackage{stmaryrd}
\usepackage{nicefrac}
\usepackage{exscale}
\usepackage{empheq}
\usepackage{mathrsfs}

\usepackage[numbers,square]{natbib}
\usepackage{url} 
\usepackage[colorlinks=true,pdfpagelabels,unicode]{hyperref}
\hypersetup{linkcolor=blue, urlcolor=blue, citecolor=blue}
\usepackage[dvipsnames,svgnames,table]{xcolor}
\usepackage{orcidlink} 

\allowdisplaybreaks

\theoremstyle{plain}

\newtheoremstyle{theo}
	{3pt} 
	{3pt} 
	{\itshape} 
	{} 
		{\bfseries} 
	{\\} 
	{ } 
	{\thmname{#1}\thmnumber{ #2.}\thmnote{ - #3}} 
\theoremstyle{theo}

\newtheorem{definition}{Definition}[section]
\newtheorem{lemma}[definition]{Lemma}
\newtheorem{theorem}[definition]{Theorem}

\newenvironment{bew}{\begin{proof}[\bfseries Proof:]}{\end{proof}}

\newtheoremstyle{remark}
	{3pt} 
	{3pt} 
	{} 
	{} 
		{\bfseries} 
	{} 
	{ } 
	{\thmname{#1}\thmnumber{ #2.}\thmnote{ - #3}} 
\theoremstyle{remark}
\newtheorem{remark}[definition]{Remark}

\DeclareMathOperator{\bomega}{\overline{\Omega}}
\DeclareMathOperator{\romega}{\partial\Omega}

\DeclareMathOperator{\supp}{supp}
\DeclareMathOperator{\intd}{d\!}

\newcommand{\epsi}{\varepsilon}

\newcommand{\uep}{u_\epsi}

\newcommand{\vep}{v_\epsi}

\newcommand{\uu}{\underline{u}}

\newcommand{\GNI}{Gagliardo--Nirenberg inequality}

\newcommand{\into}[1]{\int_0^{#1}\!}

\newcommand{\intoT}{\into{T}}

\newcommand{\intomega}{\int_{\Omega}\!} 
\newcommand{\intoTomega}{\intoT\!\intomega}

\newcommand{\intinfomega}{\int_0^\infty\!\!\intomega}

\newcommand{\Lo}[1][1]{L^{#1}(\Omega)} 
\newcommand{\W}[1][1,2]{W^{#1}(\Omega)}

\newcommand{\LSp}[2]{L^{#1\;\!}\!\left(#2\right)} 

\newcommand{\LSpb}[2]{L^{#1\;\!}\!\big(#2\big)}
\newcommand{\LSploc}[2]{L_{loc}^{#1}\!\left(#2\right)} 

\newcommand{\LSplocb}[2]{L_{loc}^{#1}\big(#2\big)}
\newcommand{\WSp}[2]{W^{#1}\!\left(#2\right)}

\newcommand{\CSp}[2]{C^{#1}\!\left(#2\right)}

\newcommand{\CSpnl}[2]{C^{#1}\!\,(#2)} 

\newcommand{\R}{\mathbb{R}}
\newcommand{\N}{\mathbb{N}}

\newcommand{\dimN}{N}

\makeatletter
\def\@fnsymbol#1{\ensuremath{\ifcase#1\or *\or \ddagger\or \#\or
   \mathsection\or \mathparagraph\or \|\or **\or \dagger\dagger
   \or \ddagger\ddagger \else\@ctrerr\fi}}

\def\@fnsymbol#1{\ensuremath{\ifcase#1\or *\or \ddagger\or \#\or
   \mathsection\or \mathparagraph\or \|\or **\or \dagger\dagger
   \or \ddagger\ddagger \else\@ctrerr\fi}}
   
 \def\@lbibitem[#1]#2#3{%
  \if\relax\@extra@b@citeb\relax\else
    \@ifundefined{br@#2\@extra@b@citeb}{}{%
     \@namedef{br@#2}{\@nameuse{br@#2\@extra@b@citeb}}%
    }%
  \fi
  \@ifundefined{b@#2\@extra@b@citeb}{%
   \def\NAT@num{}%
  }{%
   \NAT@parse{#2}%
  }%
  \def\NAT@tmp{#1}%
  \expandafter\let\expandafter\bibitemOpen\csname NAT@b@open@#2\endcsname
  \expandafter\let\expandafter\bibitemShut\csname NAT@b@shut@#2\endcsname
  \@ifnum{\NAT@merge>\@ne}{%
   \NAT@bibitem@first@sw{%
    \@firstoftwo
   }{%
    \@ifundefined{NAT@b*@#2}{%
     \@firstoftwo
    }{%
     \expandafter\def\expandafter\NAT@num\expandafter{\the\c@NAT@ctr}%
     \@secondoftwo
    }%
   }%
  }{%
   \@firstoftwo
  }%
  {%
   \global\advance\c@NAT@ctr\@ne
   \@ifx{\NAT@tmp\@empty}{\@firstoftwo}{%
    \@secondoftwo
   }%
   {%
    \expandafter\def\expandafter\NAT@num\expandafter{\the\c@NAT@ctr}%
    \global\NAT@stdbsttrue
   }{}%
   \bibitem@fin
   \item[\href{#3}{\hfil\NAT@anchor{#2}{\NAT@num}}]
   \global\let\NAT@bibitem@first@sw\@secondoftwo
   \NAT@bibitem@init
  }%
  {%
   \NAT@anchor{#2}{}%
   \NAT@bibitem@cont
   \bibitem@fin
  }%
  \@ifx{\NAT@tmp\@empty}{%
    \NAT@wrout{\the\c@NAT@ctr}{}{}{}{#2}%
  }{%
    \expandafter\NAT@ifcmd\NAT@tmp(@)(@)\@nil{#2}%
  }%
}
\makeatother

\author{
Tobias Black\footnote{tblack@math.upb.de}\ \,\orcidlink{0000-0001-9963-0800}\\
{\small Institute of Mathematics,}\\[-5pt]
{\small Paderborn University,}\\[-5pt]
{\small 33098 Paderborn, Germany}
}
\title{Avoidance of caldera-type dead cores in a chemotaxis system with degenerate diffusion and compactly supported initial population density}
\date{}

\begin{document}
\maketitle
\begin{abstract}
\noindent
{\textbf{Abstract.} 
We consider a degenerate chemotaxis system of the form
\begin{align}\label{star}\tag{$\star$}
\left\lbrace
\begin{array}{r@{}l@{\quad}l}
&u_t=\nabla\cdot\big(D(u)\nabla u-uS(u)\nabla v\big)+f(u,v),\\
&v_t=\Delta v+g(u,v),\\
\end{array}\right.
\end{align}
in a bounded domain $\Omega\subset\mathbb{R}^{N}$ with smooth boundary subjected to no-flux and homogeneous Neumann boundary conditions. Herein, the diffusion coefficient $D\in C^0([0,\infty))\cap C^1((0,\infty))$ is assumed to satisfy $D(0)=0$ and $D'(s)\geq 0$ on $(0,\infty)$, and there are $s_0\in(0,1]$ and $d>0$ such that $D(s)\geq ds^{m-1}$ on $[0,s_0]$ and that
\begin{align*}
s D'(s)\leq C_D D(s)\quad\text{for }s\in[0,s_0].
\end{align*}
The sensitivity function $S\in C^2([0,\infty))$ and the source term $f\in C^{1}([0,\infty)\times[0,\infty))$ in the first equation are supposed to be nonnegative. The source term $g\in C^{1}([0,\infty)\times[0,\infty))$ of the second equation can in fact be negative. Prototypical choices for $g$ are $g(u,v)=-uv$ and $g(u,v)=-v+u$. \smallskip 

\noindent We show under suitable assumptions on weak solutions to \eqref{star} on $\Omega\times(0,T_0)$, that
whenever the smoothly bounded domain $\omega\subset\R^N$ and $T\in(0,T_0)$ are such that
\begin{align*}
\overline{\omega}\subseteq \Omega,\qquad u_0>0\ \text{ in }\ \overline{\omega},\qquad\text{ and }\qquad u>0\ \text{ on }\ \partial\omega\times(0,T),
\end{align*}
 then 
\begin{align*}
u>0\quad\text{in }\ \overline{\omega}\times[0,T).
\end{align*}
In particular, any dead cores that appear during the evolution must have developed from regions that were already part of the initial zero set.

}\medskip

{\noindent\textbf{Keywords:} Chemotaxis, degenerate diffusion, dead core.}

{\noindent\textbf{MSC (2020):} 35K65, 35B05 (primary), 35B51, 35D30, 35Q92, 92C17.
}

\end{abstract}

\newpage
\section{Introduction}\label{sec1:intro}
Recapturing the visible patterns cell populations form in physical experiments has been a governing interest in the understanding of chemotaxis systems. The in-depth analysis undertaken in most of the prototypical mathematical models, however, did not uncover a process allowing plume-like aggregates as witnessed in \emph{Bacillus subtitlis} colonies when suspended in a drop of water (\cite{dombrowskiSelfConcentrationLargeScaleCoherence2004}), or the snowflake structures observed of \emph{B. subtilis} on agar plates (\cite{fujikawaFractalGrowthBacillus1989}) or \emph{Escherichia coli} (\cite{budreneComplexPatternsFormed1991}). Instead, the mathematical results established long-term spatial homogenization in the corresponding nutrient-taxis models, which quite certainly is not compatible with the experimental findings. (See \cite{TAO2011521,Winkler01022012,TAO20122520,JIANG.SIMA,LANKEIT2025113853} for corresponding results in a varied selection of related chemotaxis-consumption models and refer to the survey \cite{lankeitDepletingSignalAnalysis2023} for an additional overview.)\smallskip

Since an essential part of any visible pattern in a colony of bacteria consists of the areas devoid of any cells, we investigated in \cite{blackAbsenceDeadcoreFormations2025} whether solutions to the chemotaxis system
\begin{align*}
\left\lbrace
\begin{array}{r@{}l@{\quad}l}
&u_t=\nabla\cdot\big(D(u)\nabla u -uS(u)\nabla v\big)+f(u,v)\\
&v_t=\Delta v -uv,
\end{array}\right.
\end{align*}
arising from strictly positive initial data can produce zero-density zones inside the domain. In reaction-diffusion systems dedicated to chemical engineering (\cite{temkin1975diffusion,skrzypaczDeadcoreNondeadcoreSolutions2020}) the term \emph{dead core} was coined for open regions where the reactant concentration vanishes, since no chemical reaction can take place anymore inside this area. In \cite[Theorem 1.2]{blackAbsenceDeadcoreFormations2025} we showed that the classical solution to the system above remains strictly positive up to the maximal existence time. In particular, global solutions cannot form a dead core at all, which severely limits the possibility of interesting patterns to emerge. But since an overwhelming portion of realistic initial data goes beyond the assumed property of strict positivity, we now want to focus on the case where the bacterial density equals zero in a non-trivial part of the smoothly bounded domain $\Omega\subset\R^\dimN$. To be more precise, we will assume that, with some $\beta\in (0,1)$, 
\begin{align}\label{eq:IR}
\begin{cases}
u_0\in\W[1,\infty]\ &\text{is nonnegative with }u_0\not\equiv 0\ \text{and compactly supported in }\Omega, \\
v_0\in\CSp{2+\beta}{\bomega}\ &\text{is nonnegative with }v_0\not\equiv 0\ \text{and}\  \partial_\nu v_0=0\text{ on }\romega.
\end{cases}
\end{align}
For initial data with these properties, we will consider global weak solutions to an initial-boundary value problem of the form
\begin{align}\label{eq:CT-equation}
\left\lbrace
\begin{array}{r@{}l@{\quad}l@{\quad}l@{\,}c}
&u_t=\nabla\cdot\big(D(u)\nabla u -uS(u)\nabla v\big)+f(u,v)
,\ &x\in\Omega,& t\in(0,T),\\
&v_t=\Delta v +g(u,v),\ &x\in\Omega,& t\in(0,T),\\
&\big(D(u)\nabla u-u S(u)\nabla v\big)\cdot\nu=\nabla v\cdot\nu=0, &x\in\romega,& t\in(0,T),\\
&u(\cdot,0)=u_0,\quad v(\cdot,0)=v_0, &x\in\Omega,&
\end{array}\right.
\end{align}
where we assume that the diffusion coefficient $D\in\CSp{0}{[0,\infty)}\cap\CSp{1}{(0,\infty)}$ satisfies
\begin{align}\label{eq:cond-D}
D(0)=0,\qquad D(s)\geq ds^{m-1} \text{ on }\ [0,s_0],\qquad D'\geq 0\ \text{ on }\ (0,\infty)
\end{align}
for some $d>0$ and some $s_0\in(0,1]$, and that
\begin{align}\label{eq:cond-D-hoelder}
sD'(s)\leq C_D D(s)\ \text{ for }\ s\in[0,s_0].
\end{align}
The source term of the second equation is assumed to be of class $g\in\CSp{1}{[0,\infty)\times[0,\infty)}$ and the remaining constitutive functions of the first equation are supposed to be nonnegative and of class
\begin{align*}
&S\in\CSp{2}{[0,\infty)}\ \text{ and }\ f\in\CSp{1}{[0,\infty)\times[0,\infty)}.
\end{align*}
For $M>0$ we then denote by $C_S(M)>0$, $C_f(M)>0$ and $C_g(M)>0$ constants such that
\begin{align}\label{eq:cond-S_f}
S&\leq C_S(M)\ \text{ on }\ [0,M]\nonumber\\ \text{ as well as }\quad f&\leq C_f(M)\quad\text{and}\quad |g|\leq C_g(M) \text{ on }[0,M]\times[0,M]
\end{align}
and introduce the positive constants
\begin{align}\label{eq:ID-bounds}
K_{u_0}:=\|u_0\|_{\W[1,\infty]}\quad\text{and}\quad K_{v_0}:=\|v_0\|_{\CSp{2+\beta}{\bomega}},
\end{align}
which will later appear in our proofs. The most prototypical choices for all these functions consist of $D(s)=s^{m-1}$ for some $m>1$, $S(u)=1$, $f(s)=0$, and either $g(u,v)=-uv$ or $g(u,v)=-v+u$, for which \eqref{eq:CT-equation} takes the form of Keller--Segel models with porous medium type diffusion and signal consumption or production model, respectively.\smallskip

To further motivate our investigation, think about a setting where initially the cell population is centered in the middle of the domain with an outer ring of empty space around it. In this case, the dead-core set would only consist of a single connected component and one question we want to address is whether a constellation can occur where a new connected component disjoint from the original dead core appears within the support of the bacterial population. This is what we will call a \emph{caldera-type dead core}, since in a two-dimensional setting the plotted cell density would resemble the image of a small volcano.\medskip

\textbf{Main results.} The focal point of our analysis is the study of global weak solutions to \eqref{eq:CT-equation} in the following sense.
\begin{definition}\label{def:weak-sol}
Suppose $(u_0,v_0)$ satisfies \eqref{eq:IR}, let $T\in(0,\infty]$ and set $\Phi(s):=\int_0^s D(\sigma)\intd\sigma$. Then the pair of nonnegative functions $(u,v)$ will be called a weak solution of \eqref{eq:CT-equation} in $\Omega\times(0,T)$ if
\begin{align*}
u&\in \LSploc{1}{\bomega\times[0,T)}
,\qquad
v\in \LSploc{\infty}{\bomega\times[0,T)}\cap\LSploc{1}{[0,T);\W[1,1]}
\end{align*}
with $\Phi(u)\in\LSploc{1}{[0,T);\WSp{1,1}{\Omega}}$ and $u\nabla v\in\LSploc{1}{\bomega\times[0,T)}$ and if
\begin{align*}
-\intoTomega u\varphi_t-\intomega u_0\varphi(\cdot,0)=-\intoTomega\nabla\Phi(u)\cdot\nabla\varphi+\intoTomega uS(u)\nabla v\cdot\nabla\varphi+\intoTomega f(u,v)\varphi,
\end{align*}
and
\begin{align*}
-\intoTomega v\varphi_t-\intomega v_0\varphi(\cdot,0)=-\intoTomega\nabla v\cdot\nabla\varphi+\intoTomega g(u,v)\varphi
\end{align*}
hold for every $\varphi\in C^{\infty}_0\big(\bomega\times[0,T)\big)$. If $T=\infty$ the solution will be called a global weak solution of \eqref{eq:CT-equation}.
\end{definition}

We will show that under some mild additional assumptions on regularity and boundedness, any such global weak solution satisfies the following:
\begin{theorem}\label{theo1}
Let $\Omega\subset\R^\dimN$ be a smoothly bounded domain and $m>1$. Assume that $(u_0,v_0)$ satisfy \eqref{eq:IR} with some $\beta\in(0,1)$. Suppose that $D\in\CSp{0}{[0,\infty)}\cap \CSp{2}{(0,\infty)}$ satisfies \eqref{eq:cond-D} and \eqref{eq:cond-D-hoelder}, that $S\in\CSp{2}{[0,\infty)}$ and that $f\in \CSp{1}{[0,\infty)\times[0,\infty)}$ is nonnegative. Let $T_0\in(0,\infty]$ and assume that $(u,v)$ is a weak solution of \eqref{eq:CT-equation} in $\Omega\times(0,T_0)$ in the sense of Definition~\ref{def:weak-sol}, satisfying
\begin{align}\label{eq:cond-conti}
u&\in C^0\big([0,T_0);\LSp{2}{\Omega}\!\big)\cap\LSpb{\infty}{\Omega\times(0,T_0)}
\quad\ \text{and}\quad\ 
v\in C^0\big([0,T_0);\LSp{2}{\Omega}\!\big)\cap\LSpb{\infty}{\Omega\times(0,T_0)}.
\end{align}
 Then, for each $T\in(0,T_0)$ there exists $\theta=\theta(T)\in(0,1)$ such that
$$(u,v)\in \CSpnl{\theta,\frac{\theta}{2}}{\bomega\times[0,T]}\times \CSpnl{2+\theta,1+\frac{\theta}{2}}{\bomega\times[0,T]},$$
and whenever the smoothly bounded domain $\omega\subset\R^\dimN$ and $T\in(0,T_0)$ are such that
\begin{align*}
\overline{\omega}\subseteq \Omega,\qquad u_0>0\ \text{ in }\ \overline{\omega},\qquad\text{ and }\qquad u>0\ \text{ on }\ \partial\omega\times(0,T),
\end{align*}
 then 
\begin{align*}
u>0\quad\text{in }\ \overline{\omega}\times[0,T).
\end{align*}
\end{theorem}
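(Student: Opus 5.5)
The plan has two parts: first the regularity claim, then positivity inside $\omega$. For the regularity, we will use that $u,v\in L^\infty(\Omega\times(0,T_0))$ by \eqref{eq:cond-conti}, so $g(u,v)$ is bounded on $\Omega\times(0,T)$. Maximal Sobolev regularity for the Neumann heat equation then yields $\nabla v\in L^q(\Omega\times(0,T))$ for every $q<\infty$, and even $\nabla v$ Hölder continuous. With this we can treat the first equation as a degenerate parabolic equation of porous-medium type, $u_t=\nabla\cdot(D(u)\nabla u - uS(u)\nabla v)+f$. The drift $uS(u)\nabla v$ is bounded, and the degeneracy is controlled by $D(s)\ge ds^{m-1}$ and \eqref{eq:cond-D-hoelder}. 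The known Hölder theory for degenerate parabolic equations (DiBenedetto, Porzio--Vespri) then gives $u\in C^{\theta,\theta/2}(\bomega\times[0,T])$. Here the $W^{1,\infty}$ regularity of $u_0$ is needed to get Hölder continuity up to $t=0$. Feeding $u\in C^{\theta,\theta/2}$ back in, $g(u,v)$ becomes Hölder continuous. Parabolic Schauder theory, together with $v_0\in C^{2+\beta}$ and its compatibility condition, then gives $v\in C^{2+\theta,1+\theta/2}$.

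For the positivity, fix $\omega$ and $T$ as in the statement. By continuity of $u$ on $\bomega\times[0,T']$ for each $T'<T$, together with $u>0$ on $\partial\omega\times(0,T)$ and $u_0>0$ on $\overline\omega$, we get a $\delta>0$ such that $u\ge\delta$ on the parabolic boundary $(\partial\omega\times[0,T'])\cup(\overline\omega\times\{0\})$. We then argue by contradiction. Let $t_*\in(0,T']$ be the first time at which $\min_{\overline\omega}u(\cdot,t)=0$. By continuity, $u>0$ on $\overline\omega\times[0,t_*)$. On any compact set where $u\ge\eta>0$ the equation is uniformly parabolic, with $D\in C^2(0,\infty)$, $S\in C^2$ and $v\in C^{2+\theta,1+\theta/2}$, so local parabolic regularity makes $u$ classical there. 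The task is therefore to show that a classical positive solution cannot touch zero at an interior point at time $t_*$, using only bounds that stay uniform as $u\to0$.

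The approach is a comparison argument applied to a transformed quantity. Expanding the equation,
\[
u_t=\nabla\cdot(D(u)\nabla u)-(S(u)+uS'(u))\nabla u\cdot\nabla v-uS(u)\Delta v+f,
\]
and since $f\ge0$ and $|\Delta v|$, $|\nabla v|$ are bounded, we have $u_t\ge\nabla\cdot(D(u)\nabla u)-C|\nabla u|-Cu$. Substituting $w=\ln u$, or $w=u^{-\alpha}$, gives a uniformly parabolic-type inequality for $w$ in which the degenerate diffusion appears as a nonnegative term $D(u)|\nabla w|^2$ with good sign. Condition \eqref{eq:cond-D-hoelder} is exactly what bounds $uD'(u)$ by $C_D D(u)$, and this is needed to absorb the cross terms. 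We would then show $\sup_{\omega\times(0,t_*)}w\le\max\{\sup_{\text{parabolic boundary}}w,\,0\}+Ct_*$ by a maximum-principle argument, first for $w e^{-\lambda t}$ and then for a cutoff version. This gives $u\ge\delta e^{-Ct_*}$ on $\omega\times(0,t_*)$, which contradicts $u(x_*,t_*)=0$ by continuity. As $T'<T$ was arbitrary, the conclusion follows.

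The main obstacle is the first-order term $-(S+uS')\nabla u\cdot\nabla v$. In the $\ln u$ variable it becomes $C|\nabla w|$, and near a would-be interior maximum of $w$ this term vanishes, so the maximum principle still works. The delicate part is the diffusion term $\nabla\cdot(D(u)\nabla u)/u$, which produces $D(u)\Delta w+(D(u)+uD'(u))|\nabla w|^2$ and whose coefficient degenerates as $u\to0$. At an interior maximum of $w$ we have $\nabla w=0$ and $\Delta w\le0$, so these terms have the right sign. The maximum principle therefore yields $\frac{d}{dt}\max w\le C$ directly, and I would try this route first, with a careful justification of the time derivative of the maximum via a Hamilton-type lemma.
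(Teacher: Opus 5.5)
Your proposal is correct and follows essentially the paper's route. The regularity chain is the same: Hölder continuity of $u$ from degenerate-parabolic theory, Schauder estimates for $v$, then interior Schauder estimates for $u$ in the open set $\{u>0\}$. The positivity part is the same first-touching-time contradiction in $\omega\times(0,t_*)$, and your maximum-principle bound $\frac{d}{dt}\max(-\ln u)\le C_S L$ is exactly the paper's comparison with the spatially homogeneous subsolution $\underline{u}(t)=\delta e^{-C_S L t}$, written in logarithmic variables. One correction: with $w=\ln u$ you must track $\min w$ (or use $w=-\ln u$) for the bound to yield $u\ge\delta e^{-Ct}$. Also, as you eventually notice, \eqref{eq:cond-D-hoelder} plays no role in this step, since all $\nabla w$-terms vanish at the extremum; the paper uses it only for the Hölder regularity of $u$.
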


To put this into more practical terms, Theorem~\ref{theo1} illustrates that dead cores can only expand into the support of $u$ from the parts of the domain, where cell density is already zero. (See also \cite{Fischer2013,xuChemotaxisModelDegenerate2020,Fuest_Heihoff_2025} for related prototypical models where initial support shrinking does occur.)  In particular, all dead cores that are present at some time of the evolution have grown out of the closed set $\{u_0=0\}$ in a continuous manner. As a consequence, in the one-dimensional setting or in the radial setting of arbitrary dimension the Caldera-type dead cores can only form if $\mathrm{int}(\supp(u_0))\cap\{u_0=0\}\neq\emptyset$. That is, as long as the initial distribution does not already have crater-like features no new crater structures can form. In particular, if the interior of the support of $u_0$ is a ball around the origin the support of the bacterial population is limited to behave like a shrinking or expanding ball and the dead-core set can be described as an outer annular region.\medskip

To further illustrate the findings of the theorem above, let us take a closer look at one simple but relevant prototypical system. In fact, in the second part of the work we are going to consider the system
\begin{align}\label{eq:CT-prototype-choice}
\left\lbrace
\begin{array}{r@{}l@{\quad}l@{\quad}l@{\,}c}
&u_t=\nabla\cdot\big(\nabla u^{m} - u\nabla v\big)
,\ &x\in\Omega,& t>0,\\
&v_t=\Delta v -uv,\ &x\in\Omega,& t>0,\\
&\big(u^{m-1}\nabla u-u\nabla v\big)\cdot\nu=\nabla v\cdot\nu=0, &x\in\romega,& t>0,\\
&u(\cdot,0)=u_0,\quad v(\cdot,0)=v_0, &x\in\Omega,&
\end{array}\right.
\end{align}
with $m>1$ in a smoothly bounded convex domain $\Omega\subset\R^\dimN$, $\dimN\in\{2,3\}$. We will find that indeed this system has a weak solution in the sense of Definition~\ref{def:weak-sol}, which is global in time and also bounded as required by Theorem~\ref{theo1}. From an application of Theorem~\ref{theo1} we therefore obtain the following result.

\begin{theorem}\label{theo2}
Let $m>1$, assume that $(u_0,v_0)$ satisfy \eqref{eq:IR} with some $\beta\in(0,1)$. The problem \eqref{eq:CT-prototype-choice} has a global weak solution in the sense of Definition~\ref{def:weak-sol}, which is bounded in the sense that for some $M>0$
\begin{align*}
\big\|u(\cdot,t)\big\|_{\Lo[\infty]}+\big\|v(\cdot,t)\big\|_{\Lo[\infty]}\leq M\quad\text{for all }t>0.
\end{align*}
Moreover, for each $T>0$ there exists $\theta=\theta(T)\in(0,1)$ such that
$$(u,v)\in \CSpnl{\theta,\frac{\theta}{2}}{\bomega\times[0,T]}\times \CSp{2+\theta,1+\frac{\theta}{2}}{\bomega\times[0,T]},$$
and whenever the smoothly bounded domain $\omega\subset\R^\dimN$ and $T>0$ are such that
\begin{align*}
\overline{\omega}\subseteq \Omega,\qquad u_0>0\ \text{ in }\ \overline{\omega},\qquad\text{ and }\qquad u>0\ \text{ on }\ \partial\omega\times(0,T),
\end{align*}
 then 
\begin{align*}
u>0\quad\text{in }\ \overline{\omega}\times[0,T).
\end{align*}
\end{theorem}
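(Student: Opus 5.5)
The natural route is to reduce Theorem~\ref{theo2} to Theorem~\ref{theo1}. The data are $D(s)=ms^{m-1}$, $S\equiv1$, $f\equiv0$, $g(u,v)=-uv$. The hypotheses of Theorem~\ref{theo1} on the constitutive functions are immediate. Indeed, $D\in C^0([0,\infty))\cap C^2((0,\infty))$, $D(0)=0$, $D'\geq0$, $D(s)\geq ms^{m-1}$, and $sD'(s)=(m-1)D(s)$, so \eqref{eq:cond-D}--\eqref{eq:cond-D-hoelder} hold with $C_D=m-1$. The whole task therefore reduces to constructing a global weak solution in the sense of Definition~\ref{def:weak-sol} which is bounded in $L^\infty$ and has the continuity \eqref{eq:cond-conti}. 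The Hölder regularity and the positivity statement then follow verbatim from Theorem~\ref{theo1} with $T_0=\infty$.

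For existence I would use a regularization. Replace $D$ by $D_\epsi(s)=m(s+\epsi)^{m-1}$ and the initial data by smooth positive approximations $u_{0\epsi}\to u_0$ in $W^{1,\infty}$-weak$^\star$ and uniformly, with $v_{0\epsi}=v_0$. The resulting nondegenerate system has global classical solutions $(\uep,\vep)$. Local existence is standard. To make them global and get $\epsi$-independent bounds, I would proceed in four steps.
\begin{enumerate}[(i)]
\item Mass conservation $\intomega \uep=\intomega u_{0\epsi}$ and the maximum principle $0\leq\vep\leq\|v_0\|_{\Lo[\infty]}$.
\item The standard energy functional for chemotaxis-consumption systems in convex domains, in dimension $N\in\{2,3\}$. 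This combines $\intomega \uep\ln\uep$ (or $\intomega\uep^{p}$) with $\intomega|\nabla\sqrt{\vep}|^2$ or $\intomega|\nabla \vep|^4/\vep^3$. Convexity is used to drop the boundary term $\int_{\romega}\partial_\nu|\nabla v|^2\le0$. This gives bounds for $\nabla\vep$ in some $L^q$ uniformly in $\epsi$.
\item Iterating $L^p$ estimates for $\uep$, testing by $\uep^{p-1}$ and exploiting the porous-medium diffusion $m>1$, followed by a Moser--Alikakos iteration. This yields $\|\uep\|_{\Lo[\infty]}\leq M$ for all $t>0$, uniformly in $\epsi$.
\item Parabolic regularity for the $v$-equation, giving $\nabla\vep$ bounded.
\end{enumerate}
The combination of $L^\infty$ bounds with degenerate-diffusion Hölder estimates (DiBenedetto / Porzio--Vespri) gives uniform Hölder bounds for $\uep$ on $\bomega\times[0,T]$. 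Arzelà--Ascoli then yields convergence in $C^0_{loc}(\bomega\times[0,\infty))$ along a subsequence. Further, $\nabla \uep^m$ is bounded in $L^2_{loc}$, obtained by testing with $\uep^m$ or $\uep$, and $\vep$ converges in $C^{1+\theta,\theta/2}_{loc}$.

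Passing to the limit in the weak formulation is then routine. Nonnegativity of the limits is inherited. Continuity into $L^2$ follows from the uniform continuity of the limit on compact time intervals.

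The main obstacle is step (ii)--(iii): the $\epsi$-uniform global $L^\infty$ bound for $\uep$ in three dimensions with arbitrary $m>1$. First I would try the quasi-energy $\frac{d}{dt}\big\{\intomega\uep\ln\uep+\frac12\intomega|\nabla\vep|^2/\vep\big\}$, which in convex domains is known to be dissipative for consumption models up to lower-order terms absorbed via $\|\vep\|_{L^\infty}$. Then I would bootstrap. The porous-medium term $\intomega \uep^{m+p-3}|\nabla \uep|^2$ dominates the taxis contribution $\intomega \uep^{p-1}|\nabla\uep||\nabla\vep|$ once $\nabla\vep$ is controlled in $L^4_{loc}L^4$. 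Should the full-range argument fail, I would fall back on citing known global bounded weak solution results for porous-medium consumption systems. Our regularization then only needs to reproduce those estimates independently of $\epsi$.
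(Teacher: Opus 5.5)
Your proposal is correct, but it reaches the key hypothesis \eqref{eq:cond-conti} by a different route from the paper.

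\textbf{What matches the paper.} Your reduction to Theorem~\ref{theo1} agrees with the paper: with $D(s)=ms^{m-1}$ you have $d=m$ and $C_D=m-1$. So does your treatment of existence and of the bound $\|\uep\|_{L^\infty(\Omega)}+\|\vep\|_{W^{1,\infty}(\Omega)}\le M$. That bound ultimately rests on the known results for porous-medium consumption systems, and the paper likewise only sketches and cites this part.

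\textbf{The paper's route to \eqref{eq:cond-conti}.} The paper stays elementary.
\begin{enumerate}[(a)]
\item From the uniform bounds it derives a space--time bound on $(\uep+\epsi)^{m+p-3}|\nabla\uep|^2$, on $|\nabla\vep|^2$ and on $\uep\vep^2$.
\item It tests with $((\uep+\epsi)^m)_t$ and $v_{\epsi t}$. This controls $\sup_t\int_\Omega|\nabla\uep^m|^2$, $\int_0^T\!\int_\Omega(\uep+\epsi)^{m-1}u_{\epsi t}^2$ and $\int_0^T\!\int_\Omega v_{\epsi t}^2$.
\item Simon's compactness theorem then gives $u^m,v\in C^0([0,T];L^2(\Omega))$.
\item It passes to $u$ via $\|u(t)-u(s)\|_{L^2}^2\le|\Omega|^{1-\frac1m}\|u^m(t)-u^m(s)\|_{L^2}^{\frac2m}$.
\end{enumerate}
In this route the degenerate Hölder theory is used only once, on the limit, inside Theorem~\ref{theo1}.

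\textbf{Your route.} You run the Hölder theory on the approximate solutions. These are classical, so the time-continuity that this theory presupposes is automatic. You then read off \eqref{eq:cond-conti} from locally uniform convergence. This works and gives more, namely uniform convergence and Hölder regularity of the limit directly. It is heavier, though, and makes the Hölder part of Theorem~\ref{theo1} superfluous here.

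\textbf{The unchecked step.} Your route hinges on the Hölder constants being uniform in $\epsi$, which you assert without checking. This is not automatic. A direct Young absorption of $\uep\nabla\vep\cdot\nabla\uep$ into $m(\uep+\epsi)^{m-1}|\nabla\uep|^2$ leaves $\uep^2(\uep+\epsi)^{1-m}|\nabla\vep|^2$. Where $\uep\sim\epsi$ this is of order $\epsi^{3-m}$, so it is not uniformly bounded when $m>3$. The uniformity does hold if you apply a result formulated for $u_t=\Delta\Phi(u)+\nabla\cdot(au)+b$, such as the one invoked in Lemma~\ref{lem:u-hoelder}. The function $\Phi_\epsi(s)=(s+\epsi)^m-\epsi^m$ meets its hypotheses uniformly in $\epsi$:
\begin{enumerate}[(a)]
\item $s\Phi_\epsi''(s)\le(m-1)\Phi_\epsi'(s)$;
\item $|\Phi_\epsi^{-1}(\sigma_1)-\Phi_\epsi^{-1}(\sigma_2)|\le|\sigma_1-\sigma_2|^{1/m}$;
\item the drift $-\nabla\vep$ is bounded by $M$.
\end{enumerate}
You should state this verification explicitly, since it is the one step your route depends on.
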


Accordingly, the system \eqref{eq:CT-prototype-choice}, which features a chemotaxis process involving a prototypical porous medium type diffusion and consumption of signal chemical, provides one explicit first example, where a dead core has to expand from already established zero-density zones.\medskip

\textbf{Details on the approach.} The core of the argument will consist of a comparison theorem with a suitably chosen subsolution. Since $u_0$ may in fact be zero in non-trivial parts of the domain, however, we can not expect to employ the comparison theorem on the whole of $\Omega$ in any meaningful way if we want to conclude something about strict positivity of solutions. Moreover, the degeneracy of the diffusion coefficient also does not work in favor of obtaining solutions sufficiently regular to warrant the application of a comparison argument in the first place. At most we could expect some Hölder regularity for the first solution component (Lemma~\ref{lem:u-hoelder}). The condition $D(s)\geq ds^{m-1}$ in \eqref{eq:cond-D} is introduced for precisely this step in the argument to ensure that $\Phi$, as given in Definition~\ref{def:weak-sol}, has a Hölder continuous inverse. There are other conditions one could impose on $D$ to ensure this outcome. In fact, one could replace it as follows (see also \cite{TB23_hoeldertaxis}).

\begin{remark}
The condition $D(s)\geq ds^{m-1}$ in \eqref{eq:cond-D} could be replaced by requiring that there are $p>0$ and $C_D>0$ such that $$\int_0^{s_0}\!\frac{1}{|D(\sigma)|^p}\intd\sigma\leq C_D.$$
\end{remark}

The continuity of $u$ then allows for additional regularity arguments inside $\{u>0\}$ (Lemma~\ref{lem:u-classical-in-positivity-set}), which in turn fuel the local comparison undertaken in the proof of Theorem~\ref{theo1}. In Section~\ref{sec3} we will derive additional a priori estimates which ensure that the weak solution for the prototypical system \eqref{eq:CT-prototype-choice} actually satisfies the conditions in \eqref{eq:cond-conti}, which go beyond weak solution properties commonly established in the literature.

\setcounter{equation}{0}
\section{Dead cores can only grow from already established zero-density zones of the bacterial population}\label{sec2}
This section will be dedicated to our proof of Theorem~\ref{theo1}. Throughout this section, we fix a smoothly bounded $\Omega\subset\R^\dimN$, initial data $(u_0,v_0)$ satisfying \eqref{eq:IR} and $T_0\in(0,\infty]$ and denote by $(u,v)$ a weak solution of \eqref{eq:CT-equation} in $\Omega\times(0,T_0)$ in the sense of Definition~\ref{def:weak-sol}, which additionally satisfies the continuity property in \eqref{eq:cond-conti} and is bounded in the sense that for some $M>0$
\begin{align}\label{eq:assumption-bdd}
\|u(\cdot,t)\|_{\Lo[\infty]}+\|v(\cdot,t)\|_{\Lo[\infty]}\leq M\quad\text{on }(0,T_0).
\end{align}
In a first step we establish an $\Lo[\infty]$-bound for $\nabla v$.

\begin{lemma}\label{lem:v-nab-infty}
Assume that $(u,v)$ is a bounded weak solution of \eqref{eq:CT-equation} in $\Omega\times(0,T_0)$ such that \eqref{eq:cond-conti} and \eqref{eq:assumption-bdd} hold.
There is $K>0$ such that $$\|\nabla v(\cdot,t)\|_{\Lo[\infty]}\leq K\quad\text{for all }t\in(0,T_0).$$
\end{lemma}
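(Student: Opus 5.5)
We plan to treat the second equation of \eqref{eq:CT-equation} as a linear inhomogeneous heat equation $v_t=\Delta v+h$ with Neumann boundary conditions, where $h:=g(u,v)$. By \eqref{eq:assumption-bdd} and \eqref{eq:cond-S_f} we have $|h|\leq C_g(M)$ a.e.\ in $\Omega\times(0,T_0)$. We will then use the variation-of-constants representation
\begin{align*}
v(\cdot,t)=e^{t\Delta}v_0+\into{t} e^{(t-s)\Delta}h(\cdot,s)\intd s,
\end{align*}
with $(e^{t\Delta})_{t\geq0}$ the Neumann heat semigroup, together with the standard $L^p$--$L^q$ gradient estimates for that semigroup.

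The first step is to justify this representation for the weak solution. The weak formulation in Definition~\ref{def:weak-sol}, with $h\in L^\infty$, $v\in L^\infty$ and $v\in L^1_{loc}([0,T);W^{1,1})$, says that $v$ is a very weak solution of the linear Neumann problem with bounded forcing. The mild solution built from the formula above is also such a solution, and it lies in the same class. Uniqueness of very weak solutions of the linear heat equation in this class then identifies the two. We can prove this uniqueness by a duality argument: test the difference with solutions of the backward heat equation having smooth data. The continuity $v\in C^0([0,T_0);L^2(\Omega))$ from \eqref{eq:cond-conti} ensures that the initial datum is attained.

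The second step is the estimate itself. Since $\partial_\nu v_0=0$ and $v_0\in C^{2+\beta}(\bomega)$, we have $\|\nabla e^{t\Delta}v_0\|_{L^\infty}\leq C\|\nabla v_0\|_{L^\infty}\leq CK_{v_0}$ for all $t>0$. For the Duhamel term we use $\|\nabla e^{\tau\Delta}\varphi\|_{L^\infty}\leq C(1+\tau^{-1/2})e^{-\lambda_1\tau}\|\varphi\|_{L^\infty}$, where $\lambda_1>0$ is the first nonzero Neumann eigenvalue. We apply it to $\varphi-\overline{\varphi}$; this is harmless because the gradient kills constants and $\|h-\overline{h}\|_{L^\infty}\leq 2C_g(M)$. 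This gives
\begin{align*}
\|\nabla v(\cdot,t)\|_{\Lo[\infty]}\leq CK_{v_0}+2CC_g(M)\into{\infty}(1+\tau^{-\frac12})e^{-\lambda_1\tau}\intd\tau=:K,
\end{align*}
uniformly in $t\in(0,T_0)$, which handles the case $T_0=\infty$ as well.

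The main obstacle is the first step: rigorously identifying the weak solution with the mild one, since Definition~\ref{def:weak-sol} gives only low regularity for $v$. If the duality argument turns out to be cumbersome, the fallback is a mollification argument. We would mollify $h$ in time and space, solve the regularized linear problems classically, apply the above bound uniformly, and pass to the limit. Here we use that weak-$\ast$ limits preserve $L^\infty$ bounds on $\nabla v$, together with uniqueness of weak solutions to the linear problem with fixed forcing $h$.
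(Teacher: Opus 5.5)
Your proposal is correct and takes essentially the paper's route: the variation-of-constants representation of $v$, the Neumann semigroup estimate $\|\nabla e^{\tau\Delta}\varphi\|_{L^\infty(\Omega)}\leq C(1+\tau^{-1/2})e^{-\lambda_1\tau}\|\varphi\|_{L^\infty(\Omega)}$ applied to the forcing $g(u,v)$ (bounded by $C_g(M)$), and integrability of $(1+\tau^{-1/2})e^{-\lambda_1\tau}$ on $(0,\infty)$ to get a bound uniform in $t\in(0,T_0)$. The one addition is your duality argument identifying the weak solution with the mild one, which the paper takes for granted; note that the weak formulation already encodes the initial datum, so the continuity in \eqref{eq:cond-conti} is really what lets you pass from almost every $t$ to every $t$.
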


\begin{bew}
Denoting by $(e^{\sigma\Delta})_{\sigma\geq0}$ the Neumann heat semigroup and by $\lambda_1>0$ the first nonzero eigenvalue of $-\Delta$ in $\Omega$ under Neumann boundary conditions, we find from expressing $v$ by its variation-of-constants representation that
\begin{align*}
\|\nabla v(\cdot,t)\|_{\Lo[\infty]}\leq\|\nabla e^{t\Delta}v_0\|_{\Lo[\infty]}+\int_0^t\big\|\nabla e^{-(t-s)\Delta} g\big(u(\cdot,s),v(\cdot,s)\big)\big\|_{\Lo[\infty]}\intd s\quad\text{for all }t\in(0,T_0).
\end{align*}
Drawing on well-known smoothing properties (\cite[Lemma 1.3]{win10jde}) of this semigroup, we find $C_1>0$ satisfying
\begin{align}\label{eq:sem-group-est-v}
\|\nabla v(\cdot,t)\|_{\Lo[\infty]}\leq\|\nabla e^{\Delta t}v_0\|_{\Lo[\infty]}+C_1\int_0^t\big(1+(t-s)^{-\frac12}\big)e^{-\lambda_1 t}\big\|g\big(u(\cdot,s),v(\cdot,s)\big)\big\|_{\Lo[\infty]}\intd s
\end{align}
for all $t\in(0,T_0)$. With $C_g(M)>0$ as chosen in \eqref{eq:cond-S_f} we find from \eqref{eq:assumption-bdd} that
\begin{align*}
\big\|g\big(u(\cdot,s),v(\cdot,s)\big)\big\|_{\Lo[\infty]}\leq C_g(M)\quad\text{on }(0,T_0),
\end{align*}
so that plugging this back into \eqref{eq:sem-group-est-v} and drawing on \eqref{eq:ID-bounds} entails that
\begin{align*}
\big\|\nabla v(\cdot,t)\big\|_{\Lo[\infty]}\leq K_{v_0}+C_1 C_g(M)\frac{1+\sqrt{\lambda_1\pi}}{\lambda_1}>0\quad\text{on }(0,T),
\end{align*}
completing the proof.
\end{bew}

Control on the convection term at hand, we can now consider the Hölder regularity of $u$.

\begin{lemma}\label{lem:u-hoelder}
Assume that $(u,v)$ is a weak solution of \eqref{eq:CT-equation} in $\Omega\times(0,T_0)$ such that \eqref{eq:cond-conti} and \eqref{eq:assumption-bdd} hold. There are $\theta\in(0,1)$ and $C>0$ such that if $T\in(0,T_0)$, then 
\begin{align}\label{eq:u-hoelder}
\|u(\cdot,t)\|_{\CSpnl{\theta,\frac{\theta}{2}}{\bomega\times[0,T]}}\leq C.\end{align}
\end{lemma}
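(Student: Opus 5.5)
We plan to apply the Hölder regularity theory for bounded weak solutions of degenerate parabolic equations in divergence form, in the version of Porzio and Vespri, or more directly its adaptation to chemotaxis systems in \cite{TB23_hoeldertaxis}. We rewrite the first equation of \eqref{eq:CT-equation} in the form
\begin{align*}
u_t=\nabla\cdot a(x,t,u,\nabla u)+b(x,t,u),\qquad a(x,t,u,\nabla u):=D(u)\nabla u-uS(u)\nabla v,\quad b(x,t,u):=f(u,v),
\end{align*}
with no-flux boundary condition $a\cdot\nu=0$ on $\romega$. All coefficients are then evaluated along the fixed bounded functions $u,v$.

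The first step is to verify the structural conditions of that theory, in terms of $\Phi(u)$ and $D(u)$. By \eqref{eq:assumption-bdd} we have $0\le u\le M$. Lemma~\ref{lem:v-nab-infty} gives $|\nabla v|\le K$, and together with \eqref{eq:cond-S_f} the drift satisfies $|uS(u)\nabla v|\le MC_S(M)K$. Young's inequality then yields
\begin{align*}
a\cdot\nabla u\ge \tfrac12 D(u)|\nabla u|^2-C_1\frac{u^2}{D(u)}\qquad\text{and}\qquad |a|\le D(u)|\nabla u|+C_2 .
\end{align*}
The crucial point is that $u^2/D(u)\le d^{-1}u^{3-m}$ on $[0,s_0]$. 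This is bounded when $m\le 3$, but for larger $m$ the estimate has to be refined. For that case we would rather use the drift estimate
\begin{align*}
|uS(u)\nabla v\cdot\nabla u|\le C u|\nabla u|\le \tfrac12 D(u)|\nabla u|^2+C u^2/D(u),
\end{align*}
which reproduces the same issue. We therefore handle it by working with the variable $w=\Phi(u)$ together with the condition $D(s)\ge ds^{m-1}$, which ensures $\Phi^{-1}$ is Hölder continuous. Alternatively, following \cite{TB23_hoeldertaxis}, we test the local energy (De Giorgi-type) inequalities with $(u-k)_\pm\zeta^2$ and absorb the drift using the boundedness of $\nabla v$. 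The source term is bounded by $C_f(M)$.

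The second step is the local energy estimates and intrinsic scaling. We would follow DiBenedetto's intrinsic-scaling approach for the porous-medium-type degeneracy $D(s)\ge ds^{m-1}$. Condition \eqref{eq:cond-D-hoelder} controls $D$ on $[0,s_0]$ so that $D(s)\sim$ power-like, and monotonicity of $D$ handles $[s_0,M]$, where the equation is uniformly parabolic. We expect to cite the result from \cite{TB23_hoeldertaxis} (or Porzio--Vespri) giving interior and boundary Hölder continuity with exponent and constant depending only on $d,m,M,K,C_S,C_f,\Omega$. The Neumann boundary is handled by the standard flattening and reflection, and the time-zero regularity uses $u_0\in\W[1,\infty]$ with $\|u_0\|_{\W[1,\infty]}= K_{u_0}$.

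The last step is identifying the weak solution with the regular one. The continuity $u\in C^0([0,T_0);\Lo[2])$ from \eqref{eq:cond-conti} lets us apply the estimates to the given $u$ itself, with a valid energy inequality in the Steklov-averaged sense. The constants are uniform in $T$ because all the input bounds hold on $(0,T_0)$, which gives \eqref{eq:u-hoelder}.

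The main obstacle is the first step: controlling the drift $u S(u)\nabla v$ near the degeneracy $u=0$ when $m$ is large. We would first try absorbing it via the energy inequality written for truncations $(u-k)_\pm$ with levels $k$ away from $0$. When $k$ is near $0$, we would instead use that $\int u^2/D(u)$ over level sets is small, exploiting the Hölder continuity of $\Phi^{-1}$.
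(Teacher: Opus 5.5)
Your proposal is essentially the paper's argument. The paper writes the first equation as $u_t=\Delta\Phi(u)+\nabla\cdot(a(x,t)u)+b(x,t)$ with $a=-S(u)\nabla v$ and $b=f(u,v)$ bounded via Lemma~\ref{lem:v-nab-infty} and \eqref{eq:cond-S_f}, checks that $\Phi^{-1}$ is Hölder continuous on $[0,\Phi(M)]$, that $s\Phi''\le C_1\Phi'$ near $0$, and that $u_0$ is Hölder continuous, and then cites \cite[Theorem 1.8 and Corollary 1.9]{TB23_hoeldertaxis}. In that $\Phi$-formulation the drift is $\nabla\cdot(au)$ with both $a$ and $u$ bounded, so the $m>3$ obstruction you flag in the $u$-formulation never arises, and the level-set smallness heuristic in your last paragraph is not needed.
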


\begin{bew}
With $\Phi(s)=\int_0^s D(\sigma)\intd \sigma$, $a(x,t)=-S(u)\nabla v$ and $b(x,t)=f(u,v)$, the function $u$ is a weak solution of
\begin{align*}
u_{t}=\Delta\Phi(u)+\nabla\cdot\big(a(x,t)u)+b(x,t)\quad\text{in }\Omega\times(0,T).
\end{align*}
The properties of $D$ imply $\Phi\in\CSp{0}{[0,\infty)}\cap\CSp{2}{(0,\infty)}$ with $\Phi(0)=0$ and $\Phi'>0$ on $(0,\infty).$ Moreover, there are $s_0>0$ and $C_1>0$ such that
\begin{align*}
s\Phi''(s)\leq C_1\Phi'(s)\quad\text{on }[0,s_0]
\end{align*}
and that
\begin{align*}
\big\|\Phi\big(u(\cdot,t)\big)\big\|_{\Lo[\infty]}\leq \Phi(M)\quad\text{for all }t\in(0,T).
\end{align*}
Additionally, in view of \eqref{eq:cond-S_f} and Lemma~\ref{lem:v-nab-infty} there are $K>0$, $C_S=C_S(M)>0$ and $C_f=C_f(M)>0$ such that
\begin{align*}
\big\|S(u)\nabla v\big\|_{\LSp{\infty}{\Omega\times(0,T)}}\leq C_S K\quad\text{and}\quad\big\|f(u,v)\big\|_{\LSp{\infty}{\Omega\times(0,T)}}\leq C_f.
\end{align*}
Hence, introducing $C_2=C_2(M):=C_S K+C_f>0$ we have
\begin{align*}
\|a\|_{\LSp{\infty}{\Omega\times(0,T);\R^\dimN}}+\|b\|_{\LSp{\infty}{\Omega\times(0,T)}}\leq C_2.
\end{align*}
Accordingly, as $\Phi^{-1}$ is Hölder continuous on $[0,\Phi(M)]$ and $\|u_0\|_{\W[1,\infty]}\leq K_{u_0}$ entails $u_0\in\CSp{\theta_1}{\bomega}$ for some $\theta_1\in(0,1)$, the requirements of \cite[Theorem 1.8 and Corollary 1.9]{TB23_hoeldertaxis} are satisfied and the results provide $C>0$ and $\theta\in(0,1)$ such that (up to a re-definition of $u$ on a null set of $\Omega\times(0,T)$) \eqref{eq:u-hoelder} holds.
\end{bew}

With the lemma above, the coefficients in the second equation of \eqref{eq:CT-equation} are all Hölder continuous and parabolic Schauder estimates allow for a regularity improvement on $v$.

\begin{lemma}\label{lem:v-hoelder}
Assume that $(u,v)$ is a weak solution of \eqref{eq:CT-equation} in $\Omega\times(0,T_0)$ such that \eqref{eq:cond-conti} and \eqref{eq:assumption-bdd} hold. There are $\theta\in(0,1)$ and $C>0$ such that if $T\in(0,T_0)$, then
\begin{align}\label{eq:v-2-hoelder}
\big\|v(\cdot,t)\big\|_{\CSpnl{2+\theta,1+\frac{\theta}{2}}{\bomega\times[0,T]}}\leq C.
\end{align}
Moreover, one can find $L>0$ such that
$$\big\|\Delta v(\cdot,t)\big\|_{\Lo[\infty]}\leq L\quad\text{for all }t\in(0,T).$$
\end{lemma}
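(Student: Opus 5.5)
The idea is to treat the second equation of \eqref{eq:CT-equation} as a linear inhomogeneous heat equation
\begin{align*}
v_t=\Delta v+h(x,t),\qquad h(x,t):=g\big(u(x,t),v(x,t)\big),
\end{align*}
with homogeneous Neumann data, and then to apply parabolic Schauder theory. This requires Hölder regularity of $h$, compatible initial data, and an identification of the weak solution $v$ with the classical one.

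\textbf{Step 1: Hölder continuity of $v$.} Lemma~\ref{lem:u-hoelder} already gives $u\in\CSpnl{\theta_1,\frac{\theta_1}{2}}{\bomega\times[0,T]}$. So $v$ is the only component still lacking Hölder control. Since $h$ is bounded by $C_g(M)$, standard $L^p$ maximal regularity (or De Giorgi--Nash type results) for the weak heat equation with bounded forcing and $v_0\in\CSp{2+\beta}{\bomega}$ yields $v\in\CSpnl{\theta_2,\frac{\theta_2}{2}}{\bomega\times[0,T]}$. Alternatively, Lemma~\ref{lem:v-nab-infty} gives spatial Lipschitz bounds directly. For temporal Hölder regularity I would use the semigroup representation: write $v(t)-v(s)=(e^{(t-s)\Delta}-1)v(s)+\int_s^t e^{(t-\sigma)\Delta}h(\sigma)\,d\sigma$ and bound the first term via the uniform $W^{1,\infty}$ bound.

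\textbf{Step 2: Hölder continuity of $h$ and Schauder estimates.} Since $g\in\CSp{1}{[0,\infty)\times[0,\infty)}$ is Lipschitz on $[0,M]^2$, we get $h\in\CSpnl{\theta,\frac{\theta}{2}}{\bomega\times[0,T]}$ with $\theta:=\min\{\theta_1,\theta_2,\beta\}$. The norm bound depends only on $M$, $C$ from Lemma~\ref{lem:u-hoelder}, and $K$. By \eqref{eq:IR}, $v_0\in\CSp{2+\beta}{\bomega}$ with $\partial_\nu v_0=0$, so the first-order compatibility condition holds. Classical Schauder theory for the Neumann problem (Ladyzhenskaya--Solonnikov--Ural'ceva, Chapter IV) then provides a unique classical solution $\tilde v\in\CSpnl{2+\theta,1+\frac{\theta}{2}}{\bomega\times[0,T]}$. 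Its norm is bounded by $C(\|h\|_{C^{\theta,\theta/2}}+K_{v_0})$.

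\textbf{Step 3: identification and the $\Delta v$ bound.} We must show $\tilde v=v$. The difference $w=v-\tilde v$ is a weak solution of the homogeneous heat equation with zero initial data, in the sense of Definition~\ref{def:weak-sol}, and lies in $L^\infty\cap L^1_{loc}([0,T);W^{1,1})$. Uniqueness of such weak solutions follows by duality: test with solutions of the backward heat equation with smooth data, which are admissible test functions. Hence $w\equiv0$. The bound on $\Delta v$ follows directly from the $C^{2+\theta}$ bound on $[0,T]$, with $L$ equal to that constant.

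\textbf{Main obstacle.} A $T$-independent constant, as the statement suggests, is the delicate point. I would handle it by applying the Schauder estimate on unit-length windows $[t_0,t_0+2]$ with a smooth time cutoff; this is interior in time and so uses only the uniform bounds. The initial window is handled using $v_0$. I expect the temporal Hölder estimate in Step 1 to be the main technical point. If necessary, I would shortcut it by citing the parabolic Hölder regularity for bounded-coefficient equations from \cite{TB23_hoeldertaxis}, applied in its linear nondegenerate case.
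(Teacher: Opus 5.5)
Your proposal is correct and follows essentially the same route as the paper: first Hölder continuity of $v$ from the boundedness of the forcing (the paper cites Porzio--Vespri where you invoke De Giorgi--Nash or maximal regularity), then Hölder continuity of $g(u,v)$ via Lemma~\ref{lem:u-hoelder}, then the LSU parabolic Schauder estimate for the Neumann problem using $v_0\in C^{2+\beta}$ with $\partial_\nu v_0=0$, with $L$ read off from that bound. Your duality argument identifying the weak solution with the classical one, and your time-window argument for uniformity in $T$, are details the paper leaves implicit rather than a different approach.
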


\begin{bew}
Letting $b(x,t)=g\big(u(x,t),v(x,t)\big)$ the function $v$ obviously solves
\begin{align}\label{eq:hölder-v-eq}
v_{t}=\Delta v+b(x,t)\quad\text{weakly in }\Omega\times(0,T),
\end{align}
where $b$ fulfills $\|b\|_{\LSp{\infty}{\Omega\times[0,T]}}\leq C_g(M)$ with $C_g(M)>0$ as in \eqref{eq:cond-S_f}. As $\|v_0\|_{\CSp{\beta}{\bomega}}\leq K_{v_0}$, we conclude from \cite[Theorem 1.3]{PorzVesp93} that there are $\theta_1\in(0,1)$ and $C_1>0$ such that
\begin{align*}
\|v\|_{\CSpnl{\theta_1,\frac{\theta_1}{2}}{\bomega\times[0,T]}}\leq C_1.
\end{align*}
Combining with Lemma~\ref{lem:u-hoelder}, we notice that the equation in \eqref{eq:hölder-v-eq} is governed by a smooth linear parabolic differential operator with forcing term of class $\CSp{\theta_2,\frac{\theta_2}{2}}{\bomega\times[0,T]}$ for some $\theta_2\in(0,1)$. Since, moreover, $\|v_0\|_{\CSp{2+\beta}{\bomega}}\leq K_{v_0}$, parabolic Schauder theory (cf. \cite[Theorem III.5.1 and Theorem IV.5.3]{LSU}) entails the existence of $\theta_3\in(0,1)$ and $C_2>0$ such that
\begin{align*}
\|v\|_{\CSpnl{2+\theta_3,1+\frac{\theta_3}{2}}{\bomega\times[0,T]}}\leq C_2.
\end{align*}
The desired bound on $\Delta v$ with $L=C_2>0$ is an evident consequence thereof.
\end{bew}

For our main argument, we intend to employ comparison arguments inside a space-time cylinder which is contained in $$\big\{u>0\big\}:=\big\{(x,t)\in\Omega\times(0,T)\,\vert\, u(x,t)>0\big\}.$$ In order to make the first equation of \eqref{eq:CT-equation} accessible for said comparison arguments, we have to ensure that also $u$ is sufficiently regular inside $\{u>0\}$.

\begin{lemma}\label{lem:u-classical-in-positivity-set}
Assume that $(u,v)$ is a weak solution of \eqref{eq:CT-equation} in $\Omega\times(0,T_0)$ such that \eqref{eq:cond-conti} and \eqref{eq:assumption-bdd} hold and let $T\in(0,T_0)$. Then, there is $\theta\in(0,1)$ such that
\begin{align}\label{eq:u-c21-pos}
u\in\CSp{2+\theta,1+\frac{\theta}{2}}{\{u>0\}}.
\end{align}
\end{lemma}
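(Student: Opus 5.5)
Since $u$ is continuous by Lemma~\ref{lem:u-hoelder}, the set $\{u>0\}$ is relatively open in $\bomega\times(0,T)$, and the claim is local. We will therefore fix an arbitrary point $(x_0,t_0)\in\{u>0\}$ and choose a small cylinder $Q=B_r(x_0)\times(t_0-r^2,t_0]$ with $\overline{Q}\subset\{u>0\}$. If $x_0\in\romega$, we take the boundary-adapted half-cylinder instead. By compactness and continuity there is $\delta>0$ with $\delta\le u\le M$ on $\overline{Q}$. On $[\delta,M]$ the function $D$ is $C^2$, and by monotonicity it satisfies $D\ge D(\delta)>0$. So inside $Q$ the first equation is uniformly parabolic:
\begin{align*}
u_t=\nabla\cdot\big(D(u)\nabla u\big)-\nabla\cdot\big(uS(u)\nabla v\big)+f(u,v).
\end{align*}
We will verify that $u$ is a genuine weak solution in $Q$. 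Because $\Phi^{-1}$ is $C^1$ on $[\Phi(\delta),\Phi(M)]$, $\nabla u=\nabla\Phi(u)/D(u)\in L^1$ locally, and the weak formulation from Definition~\ref{def:weak-sol} holds with test functions supported in $Q$.

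The regularity will be bootstrapped in three steps. First, we show $\nabla u\in L^2(Q')$ on a slightly smaller cylinder $Q'$. To do this, we test with $\zeta^2\Phi(u)$, or $\zeta^2 u$ after a Steklov averaging in time, where $\zeta$ is a cutoff. Using the bounds on $u$ and $\nabla v$ from Lemma~\ref{lem:v-nab-infty}, this gives a local energy estimate. Second, we write the equation in divergence form as $u_t=\nabla\cdot(A(x,t)\nabla u)+\nabla\cdot F+f$, with coefficient $A(x,t):=D(u(x,t))$ and flux $F:=-uS(u)\nabla v$. Here $A$ is Hölder continuous by Lemma~\ref{lem:u-hoelder} and bounded between positive constants, $F$ is Hölder continuous by Lemma~\ref{lem:v-hoelder}, and $f(u,v)$ is bounded. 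Interior Schauder-type estimates for divergence-form equations with Hölder coefficients (cf.\ \cite[Theorem III.12.1]{LSU}) then yield $\nabla u\in C^{\theta,\theta/2}$ locally. Third, since $\nabla u$ is now Hölder, we may expand the equation in nondivergence form:
\begin{align*}
u_t=D(u)\Delta u+D'(u)|\nabla u|^2-\big(S(u)+uS'(u)\big)\nabla u\cdot\nabla v-uS(u)\Delta v+f(u,v).
\end{align*}
All coefficients and the right-hand side are Hölder continuous, because $D\in C^2$ on $[\delta,M]$, $S\in C^2$, $f\in C^1$, and $\Delta v\in C^{\theta,\theta/2}$ by Lemma~\ref{lem:v-hoelder}. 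Interior parabolic Schauder estimates (\cite[Theorem IV.10.1]{LSU}) then give $u\in C^{2+\theta,1+\theta/2}$ on a smaller cylinder.

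Near $\romega$ we will use the corresponding boundary estimates with the conormal condition $(D(u)\nabla u-uS(u)\nabla v)\cdot\nu=0$, which reduces to $\partial_\nu u=0$ because $\partial_\nu v=0$. Covering $\{u>0\}$ by such cylinders gives \eqref{eq:u-c21-pos}, understood locally with a uniform $\theta$ that can be obtained by taking the minimum over the finitely many exponents produced by the Schauder theory. This uniform exponent only depends on the data through the bounds listed above.

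The main obstacle is the first step of the bootstrap. We only know that $\Phi(u)$ has a weak gradient, and converting it into a $W^{1,2}$-type energy estimate for $u$ with a time derivative that is only distributional requires Steklov averaging and the lower bound $u\ge\delta$. Once $u\in L^2_tW^{1,2}_x$ locally is established, the $C^{1+\theta}$ gradient estimate and then the Schauder step are standard.
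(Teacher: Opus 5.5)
Your proposal follows the paper's proof. You localize to a backward cylinder in $\{u>0\}$ on which $u\ge\delta$ and use that the equation is uniformly parabolic there (the paper replaces $D$ by a uniformly positive $\tilde D$, you freeze $A=D(u)$). You then apply interior parabolic Schauder theory, and your two-stage bootstrap (divergence form giving Hölder $\nabla u$, then nondivergence form giving $C^{2+\theta,1+\frac{\theta}{2}}$) spells out what the paper compresses into its citation of Lieberman; the boundary case is not needed, since $\{u>0\}$ is defined inside $\Omega\times(0,T)$.

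One caveat concerns your Step 1. The paper does not carry it out and applies the regularity theory to $u$ directly, and for the solution of Section 3 the bound $\nabla u^m\in L^2_{loc}$ gives $\nabla u\in L^2(Q)$ at once via $|\nabla u|\le|\nabla\Phi(u)|/D(\delta)$. If only $\nabla\Phi(u)\in L^1_{loc}$ is available, however, your justification fails: testing with $\zeta^2\Phi(u)$ is not admissible, and Steklov averaging smooths only in time, so it does not supply the missing spatial integrability of $|\nabla\Phi(u)|^2$.
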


\begin{bew}
In view of Lemma~\ref{lem:u-hoelder} $u$ is continuous in $\bomega\times[0,T]$, so $\{u>0\}$ is open. Accordingly, for any arbitrary point $(x_0,t_0)\in\{u>0\}$ we can choose $r>0$ and $\eta>0$ such that $$\overline{B_{2r}}(x_0)\times[t_0-2\eta,t_0]\subset \{u>0\}$$ and such that $$\delta:=\min\big\{u(x,t)\,\vert\, (x,t)\in\overline{B_{2r}}(x_0)\times[t_0-2\eta,t_0]\big\}>0$$ is a well-defined positive number. Introducing $\tilde{D}\in \CSp{1}{[0,\infty)}$ such that $\tilde{D}(s)\geq(\frac{\delta}{2})^{m-1}$ for all $s\in[0,\infty)$ and such that $\tilde{D}(s)=s^{m-1}$ for $s\geq \delta$, we see that $u$ is a weak solution of $$u_t=\nabla\cdot\big(a(x,t)\nabla u-b(x,t)u\big)+c(x,t)\quad\text{in } B_{2r}(x_0)\times(t_0-2\eta,t_0)$$ with $a(x,t):=\tilde{D}(u)$, $b(x,t):=S(u)\nabla v$ and $c(x,t):=f(u,v)$, where the functions $a,b,c$ are Hölder continuous according to $\tilde{D}\in\CSp{1}{[0,\infty)}$, \eqref{eq:cond-S_f} and the results from Lemma~\ref{lem:u-hoelder} and Lemma~\ref{lem:v-hoelder}. Since this modified equation is uniformly parabolic in $B_{2r}(x_0)\times(t_0-2\eta,t_0)$, we can employ interior parabolic Schauder theory (e.g. \cite[Chapter IV-V]{liebermanSecondOrderParabolic1996}) to find that for some $\theta\in(0,1)$
$$u\in\CSp{2+\theta,1+\frac{\theta}{2}}{\overline{B_{r}}(x_0)\times[t_0-\eta,t_0]}.$$ Since $(x_0,t_0)\in\{u>0\}$ was arbitrary, we conclude \eqref{eq:u-c21-pos}.
\end{bew}

Sufficient regularity inside the superlevel set at hand, we can rely on comparison arguments for space-time cylinders inside $\{u>0\}$ to prove the first main theorem.

\begin{proof}[\textbf{Proof of Theorem~\ref{theo1}:}]
We fix a smoothly bounded domain $\omega\subset\R^\dimN$ and $T\in(0,T_0)$ such that 
\begin{align*}
\overline{\omega}\subseteq \Omega,\qquad u_0>0\ \text{ in }\ \overline{\omega},\qquad\text{ and }\qquad u>0\ \text{ on }\ \partial\omega\times(0,T)
\end{align*}
and claim that $u>0$ in $\overline{\omega}\times[0,T)$. Assuming the contrary, there would be some $(x_0,t_0)\in \omega\times(0,T)$ such that $u(x_0,t_0)=0$ and $u>0$ in $\omega\times(0,t_0)$, where we used that by our assumptions on $\omega$ and $T$ and the continuity of $u$ in $\bomega\times[0,T]$ we have $t_0>0$ and $x_0\not\in \partial\omega$. By the assumed positivity of $u$ in $\omega\times(0,t_0)$ and Lemma~\ref{lem:u-classical-in-positivity-set} we would have $u\in\CSp{2,1}{\omega\times(0,t_0)}$. Introducing the positive numbers $$\delta_{\overline{\omega}}^0:=\min_{\overline{\omega}} u_0>0,\qquad \delta_{\partial\omega}^{t_0}:=\min_{\partial\omega\times[0,t_0]} u>0\quad\text{and}\quad\delta_{\Sigma_{\omega}^{t_0}}:=\min\Big\{\delta_{\overline{\omega}}^0,\;\delta_{\partial\omega}^{t_0}\Big\}$$ and the spatially homogeneous function
$$\uu(t):=\delta_{\Sigma_{\omega}^{t_0}}e^{-C_s L t},\quad t\geq0,$$
with $C_S>0$ and $L>0$ provided by \eqref{eq:cond-S_f} and Lemma~\ref{lem:v-hoelder}, respectively, we would find that 
\begin{align*}
\uu_t-\nabla\cdot\big(D(\uu)\nabla\uu-\uu S(\uu)\nabla v\big)-f(\uu,v)=-C_S L\uu+\uu S(\uu)\Delta v-f(\uu,v)\leq 0\quad\text{in }\omega\times(0,t_0). 
\end{align*}
Moreover, $\uu(0)\leq\delta_{\overline{\omega}}^0\leq u_0$ and also $\uu_{\vert\partial\omega\times(0,t_0)}\leq \delta_{\partial\omega}^{t_0}\leq u_{\vert\partial\omega\times(0,t_0)}$. Hence, drawing on the parabolic comparison principle for Dirichlet data in $\omega\times(0,t_0)$, we could find that actually $$u\geq \delta_u:=\delta_{\Sigma_{\omega}^{t_0}} e^{-C_s L t_0}>0\quad\text{in }\overline{\omega}\times[0,t_0),$$ which by continuity of $u$ in $\bomega\times[0,T]$ would contradict $u(x_0,t_0)=0$. Accordingly, $u$ has to be positive throughout all of $\overline{\omega}\times[0,T)$.
\end{proof}

\setcounter{equation}{0} 
\section{Dead-core expansion in a prototypical chemotaxis system}\label{sec3}
In the final section we are investigating whether the conditions of Theorem~\ref{theo1} can indeed be met by chemotaxis systems commonly studied in the literature. For this purpose let us consider the prototypical system \eqref{eq:CT-prototype-choice} obtained from \eqref{eq:CT-equation} by the specific choices $D(u)=u^{m-1}$ with $m>1$, $S(u)\equiv 1$, $f(u,v)\equiv0$ and $g(u,v)=-uv$. In this framework our system is a close relative to the more general settings considered in the studies \cite{caoGlobalintimeBoundedWeak2014,Win-ct_fluid_3d-CPDE15,winklerChemotaxisStokesInteractionVery2022}. There, the results even cover a tensor-valued sensitivity function and the latter two also include a solution component for fluid interaction, both of which go beyond our scope here. Nevertheless, we may draw on these references to conclude the existence of a global bounded weak solution for our prototypical setting. In fact, the weak solution is obtained as limit of the classical solutions to the family of regularized systems
\begin{align}\label{eq:approx-prop}
\left\lbrace
\begin{array}{r@{}l@{\quad}l@{\quad}l@{\,}c}
&u_{\epsi t}=\nabla\cdot\big(\nabla(\uep+\epsi)^{m} - \frac{\uep}{1+\epsi\uep}\nabla \vep\big)
,\ &x\in\Omega,& t>0,\\
&v_{\epsi t}=\Delta \vep -\uep\vep,\ &x\in\Omega,& t>0,\\
&\nabla\uep\cdot\nu=\nabla \vep\cdot\nu=0, &x\in\romega,& t>0,\\
&\uep(\cdot,0)=u_{0},\quad \vep(\cdot,0)=v_{0}, &x\in\Omega.&
\end{array}\right.
\end{align}
While technically the approximate systems in the referenced works differ slightly from the one above, in particular additionally requiring a spatial cut-off for the chemotactic sensitivity near $\romega$, the obtained results easily transfer to our system family. Our main objective in this section will be to show that the weak solutions obtained in this way actually feature the continuity property stated in \eqref{eq:cond-conti}, which is a crucial and necessary trait for the application of Theorem~\ref{theo1}.

Prior to tackling this continuity property, let us briefly gather the results obtained in \cite{caoGlobalintimeBoundedWeak2014,Win-ct_fluid_3d-CPDE15,winklerChemotaxisStokesInteractionVery2022} and sketch the main steps necessary for the corresponding proof. While the approach slightly differs depending on space dimension and range for $m$, especially for small values for $m$ in 3D, the main line of reasoning exemplified below should give a sufficiently broad picture to instill an ample sense of accuracy.

\begin{lemma}\label{lem:approx-sol-prop}
Let $m>1$ and assume that $(u_0,v_0)$ satisfy \eqref{eq:IR}. Then, for each $\epsi\in(0,1)$ there is a pair of functions
\begin{align*}
(\uep,\vep)\in\big(\CSpnl{0}{\bomega\times[0,\infty)}\cap\CSpnl{2,1}{\bomega\times(0,\infty)}\big)^2
\end{align*}
solving \eqref{eq:approx-prop} classically in $\Omega\times(0,\infty)$. Moreover, there exists $M>0$ such that for all $\epsi\in(0,1)$ 
\begin{align}\label{eq:approx-sol-win-est-M}
\|\uep(\cdot,t)\|_{\Lo[\infty]}+\|\vep(\cdot,t)\|_{\W[1,\infty]}\leq M\quad\text{for all }\epsi\in(0,1)\text{ and }t>0.
\end{align}
Furthermore, there are functions
\begin{align*}
u&\in\LSp{\infty}{\Omega\times(0,\infty)},\qquad v\in\LSpb{\infty}{(0,\infty);\W[1,\infty]}
\end{align*}
with $\nabla u^m\in\LSplocb{2}{\bomega\times[0,\infty)}$ and $(\epsi_j)_{j\in\N}\subset(0,1)$ with $\epsi_j\searrow 0$ as $j\to\infty$ such that  $(\uep,\vep)\to (u,v)$ a.e. in $\Omega\times(0,\infty)$ as $j\to \infty$. The functions $u$ and $v$ satisfy the equations
\begin{align}\label{eq:w-sol-prot-eq-u}
-\intinfomega u\varphi_t-\intomega u_0\varphi(\cdot,0)=-\intinfomega\nabla u^m \cdot\nabla\varphi+\intinfomega u\nabla v\cdot\nabla\varphi,
\end{align}
and
\begin{align}\label{eq:w-sol-win-eq-v}
-\intinfomega v\varphi_t-\intomega v_0\varphi(\cdot,0)=-\intinfomega\nabla v\cdot\nabla\varphi-\intinfomega uv\varphi
\end{align}
for every $\varphi\in C^{\infty}_0\big(\bomega\times[0,\infty)\big),$ form a global weak solution of \eqref{eq:CT-prototype-choice} and are globally bounded in the sense that
\begin{align}\label{eq:w-sol-win-est-M}
\|u(\cdot,t)\|_{\Lo[\infty]}+\|v(\cdot,t)\|_{\Lo[\infty]}\leq M\quad\text{for all }t>0.
\end{align}
\end{lemma}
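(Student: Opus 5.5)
The approach follows the scheme of \cite{caoGlobalintimeBoundedWeak2014,Win-ct_fluid_3d-CPDE15,winklerChemotaxisStokesInteractionVery2022}: first global classical solvability of \eqref{eq:approx-prop}, then $\epsi$-independent bounds, and finally passage to the limit via compactness.

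\textbf{Step 1: approximate solutions.} For fixed $\epsi\in(0,1)$ the first equation of \eqref{eq:approx-prop} is uniformly parabolic, since $m(\uep+\epsi)^{m-1}\geq m\epsi^{m-1}$. Moreover, the taxis coefficient $\frac{s}{1+\epsi s}\leq\frac1\epsi$ is bounded. Standard Amann-type local theory therefore yields a local classical solution, together with an extensibility criterion in terms of $\|\uep\|_{\Lo[\infty]}$. By the comparison principle, $\uep\geq0$ and $0\leq\vep\leq\|v_0\|_{\Lo[\infty]}$, and mass conservation gives $\intomega\uep=\intomega u_0$. Because the sensitivity is bounded, a bound on $\|\nabla\vep\|_{\Lo[\infty]}$ follows by the semigroup argument of Lemma~\ref{lem:v-nab-infty}, and a Moser iteration then bounds $\uep$ for each fixed $\epsi$. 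This gives global existence.

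\textbf{Step 2: uniform-in-$\epsi$ bounds \eqref{eq:approx-sol-win-est-M}.} This is the main obstacle. I would start from the consumption-type energy functional $\intomega\uep\ln\uep+\frac12\intomega\frac{|\nabla\vep|^2}{\vep}$, which together with convexity of $\Omega$ (to control the boundary integral of $\partial_\nu|\nabla\vep|^2$) yields bounds on $\int\!\int|\nabla(\uep+\epsi)^{m/2}|^2$ and on $\int\!\int\vep|D^2\ln\vep|^2$ that do not depend on $\epsi$. From there:
\begin{enumerate}[(i)]
\item Test the first equation with $(\uep+\epsi)^{p-1}$ and combine with the evolution of $\intomega|\nabla\vep|^{2q}$, using the \GNI\ in $N\in\{2,3\}$. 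This gives $\epsi$-uniform $L^p$ bounds for all $p<\infty$.
\item Maximal Sobolev or semigroup regularity then gives the $\W[1,\infty]$ bound on $\vep$.
\item A Moser--Alikakos iteration gives the $L^\infty$ bound on $\uep$.
\end{enumerate}
For small $m$ in 3D the $L^p$ step has to be organised more carefully, as in the cited works. There I would simply rely on their estimates, noting that the sensitivity cut-off $\frac{s}{1+\epsi s}\leq s$ does not affect them.

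\textbf{Step 3: limit passage.} Testing with $(\uep+\epsi)^{m}$ bounds $\nabla(\uep+\epsi)^m$ in $L^2_{loc}(\bomega\times[0,\infty))$. I would also bound $\partial_t(\uep+\epsi)^m$ in $L^1_{loc}([0,\infty);(W^{N+1,2}(\Omega))^*)$ and $\partial_t\vep$ in $L^2_{loc}$. An Aubin--Lions argument then gives a subsequence along which $(\uep+\epsi)^m$ and $\vep$ converge a.e. Since $s\mapsto s^{1/m}$ is continuous, $\uep\to u$ a.e. as well. By the uniform bound \eqref{eq:approx-sol-win-est-M}, the convergences $\uep\to u$ and $\frac{\uep}{1+\epsi\uep}\to u$ hold strongly in $L^p_{loc}$ by dominated convergence. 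In addition, $\nabla\vep\overset{*}{\rightharpoonup}\nabla v$ in $L^\infty$ and $\nabla(\uep+\epsi)^m\rightharpoonup\nabla u^m$ in $L^2_{loc}$. These convergences suffice to pass to the limit in the weak formulations, yielding \eqref{eq:w-sol-prot-eq-u} and \eqref{eq:w-sol-win-eq-v}. Finally, \eqref{eq:w-sol-win-est-M} follows from \eqref{eq:approx-sol-win-est-M} by a.e. convergence, after possibly redefining $u,v$ on a null set of times.
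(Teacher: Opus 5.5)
Your proposal follows essentially the same route as the paper's own sketch. Both construct global classical solutions of \eqref{eq:approx-prop}, derive $\epsi$-independent bounds by testing with $(\uep+\epsi)^{p-1}$ and $|\nabla\vep|^{2q-2}$ together with the Gagliardo--Nirenberg inequality, and close with Moser iteration and semigroup estimates, deferring the small-$m$ case in 3D to \cite{winklerChemotaxisStokesInteractionVery2022}. Both then finish with an Aubin--Lions limit passage, so at this level of detail your argument is fine.

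Two minor slips should be repaired, though neither affects the outcome:
\begin{enumerate}[(i)]
\item In Step 1, the argument of Lemma~\ref{lem:v-nab-infty} presupposes an $L^\infty$ bound for $\uep$, so it cannot by itself give global existence for fixed $\epsi$. Instead, run your Step 2 estimates on $(0,T_{\max,\epsi})$, or bootstrap from the $\epsi$-dependent $L^2$ bound for $\uep$ that the bounded sensitivity provides directly.
\item System \eqref{eq:approx-prop} regularizes only the sensitivity, not the consumption term $\uep\vep$. Hence the cross terms of $\intomega\uep\ln\uep+\frac12\intomega\frac{|\nabla\vep|^2}{\vep}$ do not cancel exactly. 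Using $\intomega(\uep\ln\uep+\frac{\epsi}{2}\uep^2)$ instead restores exact cancellation. Note that the paper's sketch never uses this functional: it relies only on mass conservation and $\|\vep\|_{\Lo[\infty]}\le\|v_0\|_{\Lo[\infty]}$.
\end{enumerate}
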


\begin{bew}
Since these results are well-established, we omit most details of the proof and encourage the reader to confer with the original works \cite{caoGlobalintimeBoundedWeak2014,Win-ct_fluid_3d-CPDE15,winklerChemotaxisStokesInteractionVery2022} for additional details regarding global solvability of the approximate systems and the construction of the limit solution. Nevertheless, let us briefly mention some details on the approach for \eqref{eq:approx-sol-win-est-M}, as this is a crucial ingredient for the refined a priori estimates we need to prepare later on.\smallskip

In the 2D setting or the 3D setting with $m>\frac{7}{6}$ the basic idea is to test the first equation against $(\uep+\epsi)^{p-1}$ for $p>1$ and the second equation against $|\nabla\vep|^{2(q-1)}$ for $q>1$. Since the consumption of signal in the second equation implies that there is $K>0$ such that $\|\vep(\cdot,t)\|_{\Lo[\infty]}\leq K$ for all $t>0$, one can derive that with some $C_1=C_1(p)>0$, $C_2=C_2(p)>0$, $C_3=C_3(q)>0$, $C_4>0$ and $C_5=C_5(q)>0$ we have
\begin{align}\label{eq:3.1-eq1}
&\frac{\intd}{\intd t} \intomega(\uep+\epsi)^p+C_1\intomega\big|\nabla(\uep+\epsi)^\frac{m+p-1}{2}\big|^2\leq C_2\intomega(\uep+\epsi)^{p-m+1}|\nabla\vep|^2\quad\text{and}\\\label{eq:3.1-eq2}
&\frac{\intd}{\intd t} \intomega |\nabla\vep|^{2q}+C_3\intomega \big|\nabla|\nabla\vep|^q\big|^2+C_4\intomega|\nabla\vep|^{2q-2}\big|D^2\vep\big|^2\leq C_5\intomega \uep^2|\nabla\vep|^{2q-2}
\end{align}
for all $t>0$ and $\epsi\in(0,1)$. Note that in this process we always use integration by parts to move the derivative away from $\nabla\cdot\big(\frac{\uep}{1+\epsi\uep}\nabla\vep\big)$ and then estimate $\frac{1}{1+\epsi\uep}\leq 1$ to get rid of the approximation term in the chemotactic sensitivity.
Thereafter, using the Hölder and Gagliardo--Nirenberg inequalities to estimate the terms on the right-hand side, one derives a differential inequality of the form 
\begin{align}\label{eq:3.1-diff-ineq}
y_\epsi'(t)+\frac{1}{\mu} y_\epsi(t)\leq \mu 
\end{align}
for $y_\epsi(t):= \intomega\,(\uep(\cdot,t)+\epsi)^p+\intomega |\nabla\vep(\cdot,t)|^{2q}$ and constant $\mu=\mu(p,q)>0$. Let us illustrate some steps of this estimation for the $3D$ case with $m>\frac{7}{6}$ (cf. \cite{Win-ct_fluid_3d-CPDE15}):\smallskip

Starting with the term on the right-hand side of \eqref{eq:3.1-eq1}, we utilize the Hölder inequality to estimate
\begin{align}\label{eq:3.1-eq3}
C_2\intomega(\uep+\epsi)^{p-m+1}|\nabla\vep|^2&\leq C_2\Big(\intomega(\uep+\epsi)^\frac{(p-m+1)(q+1)}{q}\Big)^\frac{q}{q+1}\Big(\intomega|\nabla\vep|^{2q+2}\Big)^\frac{1}{q+1}\nonumber\\
&=C_2\big\|(\uep+\epsi)^\frac{m+p-1}{2}\big\|_{\Lo[\frac{2(p-m+1)(q+1)}{(m+p-1)q}]}^\frac{2(p-m+1)}{m+p-1}\big\|\nabla\vep\big\|_{\Lo[2q+2]}^2
\end{align}
for all $t>0$ and $\epsi\in(0,1)$. Here, employing the \GNI\ to the first term entails the existence of $C_6=C_6(p,q)>0$ satisfying
\begin{align*}
&\Big\|(\uep+\epsi)^\frac{m+p-1}{2}\Big\|_{\Lo[\frac{2(p-m+1)(q+1)}{(m+p-1)q}]}^\frac{2(p-m+1)}{m+p-1}\\\leq\ &C_6\Big\|\nabla(\uep+\epsi)^\frac{m+p-1}{2}\Big\|_{\Lo[2]}^{\frac{2(p-m+1)}{m+p-1}\cdot a}\Big\|(\uep+\epsi)^\frac{m+p-1}{2}\Big\|_{\Lo[\frac{2}{m+p-1}]}^{\frac{2(p-m+1)}{m+p-1}\cdot(1-a)}+C_6\Big\|(\uep+\epsi)^\frac{m+p-1}{2}\Big\|_{\Lo[\frac{2}{m+p-1}]}^\frac{2(p-m+1)}{m+p-1}
\end{align*}
for all $t>0$ and $\epsi\in(0,1)$, with $a=\frac{3(m+p-1)}{p-m+1}\cdot\frac{(q+1)(p-m+1)-q}{3m+3p-2}$. Accordingly, by conservation of mass, we can find $C_7=C_7(p,q)>0$ such that
\begin{align}\label{eq:3.1-eq4}
\Big\|(\uep+\epsi)^\frac{m+p-1}{2}\Big\|_{\Lo[\frac{2(p-m+1)(q+1)}{(m+p-1)q}]}^\frac{2(p-m+1)}{m+p-1}&\leq C_7\Big(\intomega\big|\nabla(\uep+\epsi)^\frac{m+p-1}{2}\big|^2+1\Big)^{\frac{2(p-m+1)}{m+p-1}\cdot a}
\end{align}
for all $t>0$ and $\epsi\in(0,1)$. Similarly, relying on a more specialized variant of the \GNI\ (see e.g. \cite[Lemma 3.8]{Win-ct_fluid_3d-CPDE15}) for the second term on the right-hand side of \eqref{eq:3.1-eq3}, one finds $C_8=C_8(p,q)>0$ and $C_9=C_9(p,q)>0$ satisfying
\begin{align}\label{eq:3.1-eq5}
\big\|\nabla\vep\big\|_{\Lo[2q+2]}^2&\leq \Big\| |\nabla\vep|^{q-1} D^2\vep\Big\|_{\Lo[2]}^\frac{2}{q+1}\big\|\vep\big\|_{\Lo[\infty]}^\frac{2}{q+1}+C_8\big\|\vep\big\|_{\Lo[\infty]}^2\nonumber\\
&\leq C_9\Big(\intomega|\nabla\vep|^{2q-2} |D^2\vep|^2+1\Big)^\frac{1}{q+1}\quad\text{for all }t>0\text{ and }\epsi\in(0,1),
\end{align}
in light of the boundedness of $\vep$. Combining \eqref{eq:3.1-eq3}--\eqref{eq:3.1-eq5} and employing Young's inequality we first find that for any $\eta>0$ there is $C_{10}=C_{10}(p,q,\eta)>0$ such that
\begin{align*}
&C_2\intomega(\uep+\epsi)^{p-m+1}|\nabla\vep|^2\\
\leq\ &\eta\Big(\intomega|\nabla\vep|^{2q-2}|D^2\vep|^2+1\Big)+C_{10}\Big(\intomega\big|\nabla(\uep+\epsi)^\frac{m+p-1}{2}\big|^2+1\Big)^{\frac{2(p-m+1)}{m+p-1}\cdot\frac{q+1}{q}\cdot a}
\end{align*}
for all $t>0$ and $\epsi\in(0,1)$. Restricting to $q>1$ and $p>\max\{1,m-1\}$ satisfying $p<(2m-\tfrac43)q+m-1$ we have $$\frac{2(p-m+1)}{m+p-1}\cdot\frac{q+1}{q}\cdot a<1$$ and can draw on Young's inequality an additional time to find that with some $C_{11}=C_{11}(p,q,\eta)>0$ 
\begin{align}\label{eq:3.1-eq6}
C_2\intomega(\uep+\epsi)^{p-m+1}|\nabla\vep|^2\leq \eta\Big(\intomega|\nabla\vep|^{2q-2}|D^2\vep|^2+1\Big)+\eta\Big(\intomega\big|\nabla(\uep+\epsi)^\frac{m+p-1}{2}\big|^2+1\Big)+C_{11}
\end{align}
holds for all $t>0$ and $\epsi\in(0,1)$. Arguing similarly, restricting to $p,q>1$ satisfying $p>\frac{3p-3m+4}{3}$ one can also show that there is $C_{12}=C_{12}(p,q,\eta)>0$ such that
\begin{align}\label{eq:3.1-eq7}
C_5\intomega\uep^2|\nabla\vep|^{2q-2}\leq \eta\Big(\intomega|\nabla\vep|^{2q-2}|D^2\vep|2+1\Big)+\eta\Big(\intomega\big|\nabla(\uep+\epsi)^\frac{m+p-1}{2}\big|2+1\Big)+C_{12}
\end{align}
for all $t>0$ and $\epsi\in(0,1)$. Here, direct calculation show that the admissible range for $p$ to make use of both \eqref{eq:3.1-eq6} and \eqref{eq:3.1-eq7} is not empty under the assumption $m>\frac{7}{6}$, and picking $\eta>0$ sufficiently small, we conclude from gathering \eqref{eq:3.1-eq1}, \eqref{eq:3.1-eq2}, \eqref{eq:3.1-eq6} and \eqref{eq:3.1-eq7} that there is $C_{13}=C_{13}(p,q,\eta)>0$ such that
\begin{align}\label{eq:3.1-eq8}
y'_\epsi(t)+ \frac{1}{C_{13}}\Big(\intomega\big|\nabla(\uep+\epsi)\big|^\frac{m+p-1}{2}+\intomega\big|\nabla|\nabla\vep|^q\big|^2+\intomega|\nabla\vep|^{2q-2}|D^2\vep|^2\Big)\leq C_{13}
\end{align}
holds for all $t>0$ and $\epsi\in(0,1)$. Hence, estimating terms on the left-hand side one last time from below, we infer the postulated differential inequality for $y_\epsi$ as described in \eqref{eq:3.1-diff-ineq}.

The estimations in the two-dimensional setting follow the same philosophy (\cite{caoGlobalintimeBoundedWeak2014}), with the exponents of the Gagliardo--Nirenberg inequalities providing more leniency for the range of $m$ in this case.\smallskip

Having the autonomous differential inequality in \eqref{eq:3.1-diff-ineq} at hand, we can use a comparison argument to extract new bounds on $\uep$ and $\nabla\vep$, which can then be exploited by a Trudinger--Moser type iteration to establish the bound for $\uep$ in $\Lo[\infty]$. Whereafter, drawing on semigroup estimates for the Neumann heat semigroup, we can also conclude the $\W[1,\infty]$-bound for $\vep$. Additionally, integrating the differential inequality in \eqref{eq:3.1-eq8} also provides spatio-temporal bounds for the gradient terms, which in combination with easily established spatio-temporal bounds on the time derivatives allow an application of an Aubin--Lions type lemma to obtain limit functions of the desired regularity, which also satisfy \eqref{eq:w-sol-prot-eq-u}--\eqref{eq:w-sol-win-est-M}.\smallskip

In 3D with small values of $m$, i.e. $m\in(1,2]$, the differential inequality has to be prepared in a more intricate way (\cite{winklerChemotaxisStokesInteractionVery2022}). In fact, one introduces $D_\epsi(u)=\frac{1}{m}(\uep+\epsi)^{m-1}$, $D_{1,\epsi}(s)=\int_0^{s} D_{\epsi}(\sigma)\intd \sigma$ and $D_{2,\epsi}(u)=\int_0^{u} D_{1,\epsi}(s)\intd s$, as well as 
$$\psi_{0,\epsi}^{(s_0)}(u)=\begin{cases}
\frac{1}{D_\epsi(u)},\quad &u\in(0,s_0),\\
\frac{2s_0-u}{s_0 D_\epsi(s_0)},\quad &u\in[s_0,2s_0],\\
0,\quad &u>2s_0
\end{cases} $$
for $s_0>0$, $\psi_{1,\epsi}^{(s_0)}(u)=\int_{2s_0}^u\psi_{0,\epsi}^{(s_0)}(s)\intd s$ and $\psi_{2,\epsi}^{(s_0)}(u)=\int_{2s_0}^u\psi_{1,\epsi}^{(s_0)}(s)\intd s$ and (cf. \cite[Lemmas 3.1-3.5]{winklerChemotaxisStokesInteractionVery2022}) calculates 
\begin{align*}
&\frac{\intd}{\intd t}\intomega D_{2,\epsi}(\uep)+\frac{1}{2}\intomega D_\epsi^2(\uep)|\nabla\uep|^2\leq c_1\intomega\frac{\uep^2}{\vep}|\nabla\vep|^2,\\
&\frac{\intd}{\intd t}\intomega\frac{\uep|\nabla\vep|^2}{\vep}+\frac{1}{2}\intomega\frac{\uep|\nabla\vep|^2}{\vep}+2\intomega\frac{\uep}{\vep}\big|D^2\vep\big|^2+2\intomega\frac{\uep}{\vep^3}|\nabla\vep|^4\\
&\qquad\qquad\leq \eta \intomega D_\epsi^2(\uep)|\nabla\uep|^2+c_2(\eta,K)\bigg(\intomega|\nabla\uep|^2+\intomega\frac{\big|D^2\vep\big|^2|\nabla\vep|^2}{\vep^3}+\intomega\frac{|\nabla\vep|^6}{\vep^5}\bigg),\\
&\frac{\intd}{\intd t}\intomega \frac{|\nabla\vep|^4}{\vep^3}+\gamma\intomega\frac{|\nabla\vep|^6}{\vep^5}+\gamma\intomega\frac{|\nabla\vep|^2}{\vep^3}\big|D^2\vep\big|^2\leq c_3(K)\intomega|\nabla\uep|^2,\intertext{and}
&\frac{\intd}{\intd t}\intomega \psi_{2,\epsi}(\uep)+\frac{1}{2}\int_{\{\uep<s_0\}}\big|\nabla\uep\big|^2\leq c_4(K,s_0)\intomega\frac{|\nabla\vep|^2}{\vep}
\end{align*}
with positive constants $c_i>0$ and a certain constant $\gamma>0$. Choosing $s_0$ and $\eta$ properly, one obtains a differential inequality of structural similarity to before, but with the additional benefit of featuring $\intomega |\nabla\uep|^2$ on the left-hand side. From this one can refine to additional bounds, which once more fuel an Aubin--Lions type argument to conclude the existence of the limit functions satisfying the weak solution concept.
\end{bew}

For the remainder of this work we let $m>1$ and fix initial data $u_0,v_0$ complying with \eqref{eq:IR}. To establish that the corresponding limit solution, in addition to the properties described in Lemma~\ref{lem:approx-sol-prop}, also satisfies the quintessential condition \eqref{eq:cond-conti}, we require supplementary precompactness properties, which go beyond the scope of the cited works. As first step in our pursuit in this matter, we prove a spatio-temporal bound on $(\uep+\epsi)^{m+p-3}|\nabla\uep|^2$.

\begin{lemma}\label{lem:st-bound-grad-uep}
Let $p>m-1$. Denote by $(\uep,\vep)$ the classical solution to \eqref{eq:approx-prop} with characteristics as described in Lemma~\ref{lem:approx-sol-prop}. Then, there is $C=C(p)>0$ such that
\begin{align*}
\intoTomega (\uep+\epsi)^{m+p-3}|\nabla\uep|^2+\intoTomega|\nabla\vep|^2+\intoTomega\uep\vep^2\leq C
\end{align*}
holds for all $T>0$ and $\epsi\in(0,1)$.
\end{lemma}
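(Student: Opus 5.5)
We prove the three bounds separately, using only the structure of the approximate system together with the uniform bound \eqref{eq:approx-sol-win-est-M} from Lemma~\ref{lem:approx-sol-prop}. The key point is that $C$ must not depend on $T$, so every estimate has to come from an identity whose left-hand side is a nonnegative dissipation integrated in time, bounded by quantities controlled only by $M$ and the initial data.

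\textbf{The terms involving $\vep$.} Integrating the second equation of \eqref{eq:approx-prop} over $\Omega$ gives
\begin{align*}
\frac{\intd}{\intd t}\intomega\vep=-\intomega\uep\vep\leq 0 .
\end{align*}
Testing the same equation with $\vep$ yields
\begin{align*}
\frac12\frac{\intd}{\intd t}\intomega\vep^2+\intomega|\nabla\vep|^2+\intomega\uep\vep^2=0 .
\end{align*}
Integrating over $(0,T)$ and dropping the nonnegative term at time $T$, we obtain
\begin{align*}
\intoTomega|\nabla\vep|^2+\intoTomega\uep\vep^2\leq\frac12\intomega v_0^2\leq\frac12|\Omega|K_{v_0}^2 ,
\end{align*}
uniformly in $T$ and $\epsi$.

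\textbf{The gradient term for $\uep$, first attempt.} We test the first equation with $(\uep+\epsi)^{p-1}$ when $p\neq1$, and with $\ln(\uep+\epsi)$ when $p=1$, after dividing by $p(p-1)$ or the appropriate constant. The diffusion term produces $m(p-1)\intomega(\uep+\epsi)^{m+p-3}|\nabla\uep|^2$. The cross term is
\begin{align*}
(p-1)\intomega\frac{\uep}{1+\epsi\uep}(\uep+\epsi)^{p-2}\nabla\uep\cdot\nabla\vep .
\end{align*}
Young's inequality would absorb half of it into the diffusion and leave $C\intomega(\uep+\epsi)^{p-m+1}|\nabla\vep|^2$. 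The exponent condition $p>m-1$ makes $(\uep+\epsi)^{p-m+1}\leq(M+1)^{p-m+1}$. This bounds the remainder by a constant times $\intomega|\nabla\vep|^2$, whose time integral was just shown to be finite uniformly in $T$. Integrating in time, the term $\intomega(\uep(\cdot,T)+\epsi)^p$ is bounded by $(M+1)^p|\Omega|$. This gives the claim whenever $p>1$, since then the sign of the tested identity is right.

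\textbf{The case $m-1<p<1$ and the expected obstacle.} Here the function $s\mapsto s^p$ is concave. The identity then reads
\begin{align*}
-\frac{1}{p}\frac{\intd}{\intd t}\intomega(\uep+\epsi)^p+(1-p)m\intomega(\uep+\epsi)^{m+p-3}|\nabla\uep|^2=\ldots
\end{align*}
after multiplying through by $-1/p$, and the time-boundary terms are again bounded by $(M+1)^p|\Omega|$. The case $p=1$ is handled the same way with the entropy $\intomega(\uep+\epsi)\ln(\uep+\epsi)$, which is bounded above and below by constants depending on $M$.

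I expect this sign and entropy bookkeeping, together with the time-uniform control of the chemotactic remainder through $\intoTomega|\nabla\vep|^2$, to be the main point. Once $\vep$'s dissipation is available independently of $T$, the argument closes for every $p>m-1$.
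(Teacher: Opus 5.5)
Your proposal is correct and follows the paper's proof. Testing the second equation with $v_\varepsilon$ gives the $T$-independent bound $\int_0^T\!\int_\Omega|\nabla v_\varepsilon|^2+\int_0^T\!\int_\Omega u_\varepsilon v_\varepsilon^2\le\frac12\int_\Omega v_0^2$; testing the first equation with $(u_\varepsilon+\varepsilon)^{p-1}$ and applying Young's inequality, together with $u_\varepsilon^2(u_\varepsilon+\varepsilon)^{p-m-1}\le(M+1)^{p-m+1}$ for $p>m-1$, then reduces the chemotactic remainder to that dissipation. Your separate treatment of $m-1<p\le 1$ (where the sign reverses, so the time-$T$ boundary term is the one bounded by $(M+1)^p|\Omega|$, with the entropy used for $p=1$) adds care the paper's write-up lacks, since its Young step tacitly assumes $p>1$.
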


\begin{bew}
Due to the nonnegativity of $\vep$ we easily obtain from testing the second equation of \eqref{eq:approx-prop} against $\vep$ and integrating by parts that
\begin{align}\label{eq:st-bound-grad-uep-eq1}
\intoTomega|\nabla\vep|^2+\intoTomega\uep\vep^2\leq \frac{1}{2}\intomega v_0^2
\end{align}
holds for all $T>0$ and $\epsi\in(0,1)$. Similarly, testing the first equation of \eqref{eq:approx-prop} against $(\uep+\epsi)^{p-1}$ and integrating by parts, we find that
\begin{align*}
\frac{1}{p}\frac{\intd}{\intd t}\intomega (\uep+\epsi)^{p}&=-m(p-1)\intomega (\uep+\epsi)^{m+p-3}|\nabla\uep|^2+(p-1)\intomega\frac{\uep}{1+\epsi\uep}(\uep+\epsi)^{p-2}\big(\nabla\vep\cdot\nabla\uep\big)
\end{align*}
holds for all $t>0$ and $\epsi\in(0,1)$. Employing Young's inequality in the last integral, we conclude that
\begin{align*}
\frac{1}{p}\frac{\intd}{\intd t}\intomega (\uep+\epsi)^{p}+\frac{m(p-1)}{2}\intomega(\uep+\epsi)^{m+p-3}|\nabla\uep|^2&\leq \frac{p-1}{2m}\intomega\uep^2(\uep+\epsi)^{p-m-1}|\nabla\vep|^2
\end{align*}
is valid for all $t>0$ and $\epsi\in(0,1)$, which by integrating from $0$ to $T$ entails that
\begin{align*}
\intoTomega(\uep+\epsi)^{m+p-3}|\nabla\uep|^2\leq \frac{1}{p}\intomega(u_0+1)^{p}+\frac{p-1}{2m}\intoTomega (\uep+1)^{p-m+1}|\nabla\vep|^2
\end{align*}
for all $T>0$ and $\epsi\in(0,1)$. In light of $p-m+1>0$ and $\|\uep(\cdot,t)\|_{\Lo[\infty]}\leq M$ for all $t>0$, we get
\begin{align}\label{eq:st-bound-grad-uep-eq2}
\intoTomega(\uep+\epsi)^{m+p-3}|\nabla\uep|^2\leq \frac{1}{p}\intomega(u_0+1)^{p}+\frac{p-1}{2m}(M+1)^{p-m+1}\intoTomega |\nabla\vep|^2
\end{align}
for all $T>0$ and $\epsi\in(0,1)$. The conclusion of the proof is now an evident combination of \eqref{eq:st-bound-grad-uep-eq1}, \eqref{eq:st-bound-grad-uep-eq2} and \eqref{eq:ID-bounds}.
\end{bew}

In the next step we derive some additional quantifiable information on a quantity involving the time-derivatives of $\uep$ and $\vep$.

\begin{lemma}\label{lem:bound-time-deriv-uep}
 Denote by $(\uep,\vep)$ the classical solution to \eqref{eq:approx-prop} obtained in Lemma~\ref{lem:approx-sol-prop}. There is $C>0$ such that
\begin{align*}
\intomega\big|\nabla\uep^m(\cdot,T)\big|^2+\intoTomega (\uep+\epsi)^{m-1}\big(u_{\epsi t}\big)^2+\intoTomega\big(v_{\epsi t}\big)^2\leq C
\end{align*}
holds for all $T>0$ and $\epsi\in(0,1)$.
\end{lemma}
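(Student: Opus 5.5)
We test the first equation of \eqref{eq:approx-prop} against $\partial_t(\uep+\epsi)^m$ and the second against $v_{\epsi t}$, integrate over $\Omega\times(0,T)$, and absorb the chemotactic cross term using the bounds of Lemma~\ref{lem:approx-sol-prop} and Lemma~\ref{lem:st-bound-grad-uep}.

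Write $w_\epsi:=(\uep+\epsi)^m$, so $\nabla w_\epsi=\nabla(\uep+\epsi)^m$ and $w_{\epsi t}=m(\uep+\epsi)^{m-1}u_{\epsi t}$. Testing the first equation against $w_{\epsi t}$ gives
\begin{align*}
m\intomega(\uep+\epsi)^{m-1}u_{\epsi t}^2+\frac12\frac{\intd}{\intd t}\intomega|\nabla w_\epsi|^2=\intomega\frac{\uep}{1+\epsi\uep}\nabla\vep\cdot\nabla w_{\epsi t}.
\end{align*}
The boundary terms vanish since $\nabla\uep\cdot\nu=\nabla\vep\cdot\nu=0$. If the classical regularity does not cover $\nabla w_{\epsi t}$, we first work on $(\tau,T)$ with $\tau>0$, or use a time-difference-quotient version. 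On the right-hand side we integrate by parts in time:
\begin{align*}
\intoT\!\!\intomega\frac{\uep}{1+\epsi\uep}\nabla\vep\cdot\nabla w_{\epsi t}=\Big[\intomega\frac{\uep}{1+\epsi\uep}\nabla\vep\cdot\nabla w_\epsi\Big]_0^T-\intoTomega\Big(\partial_t\Big(\frac{\uep}{1+\epsi\uep}\Big)\nabla\vep+\frac{\uep}{1+\epsi\uep}\nabla v_{\epsi t}\Big)\cdot\nabla w_\epsi .
\end{align*}
We treat each resulting term as follows.

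\textbf{Boundary terms.} At time $T$, by \eqref{eq:approx-sol-win-est-M} and Young's inequality, the term is at most $\frac14\intomega|\nabla w_\epsi(\cdot,T)|^2+C$. At time $0$ the term is bounded by $K_{u_0}$, $K_{v_0}$ and $M$.

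\textbf{The $\partial_t\uep$ term.} Since $\partial_t\frac{\uep}{1+\epsi\uep}=\frac{u_{\epsi t}}{(1+\epsi\uep)^2}$ and $|\nabla w_\epsi|=m(\uep+\epsi)^{m-1}|\nabla\uep|$, Young's inequality bounds this term by
\begin{align*}
\frac m2\intoTomega(\uep+\epsi)^{m-1}u_{\epsi t}^2+C\intoTomega(\uep+\epsi)^{m-1}|\nabla\uep|^2 .
\end{align*}
The first part is absorbed by the left-hand side. The second is Lemma~\ref{lem:st-bound-grad-uep} with $p=2$, which is admissible because it needs $p>m-1$, i.e.\ $m<3$. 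For larger $m$ we pick $p=2$ anyway and note that $(\uep+\epsi)^{m-1}\le (M+1)^{m-1-(m+p-3)}(\uep+\epsi)^{m+p-3}$ for any $p\le 2$ with $p>m-1$. If no such $p$ exists, we instead take $p$ slightly larger than $m-1$ and use the weight $(\uep+\epsi)^{m+p-3}\ge$ a constant multiple of $(\uep+\epsi)^{m-1}\cdot(\uep+\epsi)^{p-2}$. This is the place where the exponent bookkeeping has to be checked.

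\textbf{The $\nabla v_{\epsi t}$ term (main obstacle).} We integrate by parts in space:
\begin{align*}
-\intomega\frac{\uep}{1+\epsi\uep}\nabla v_{\epsi t}\cdot\nabla w_\epsi=\intomega v_{\epsi t}\,\nabla\cdot\Big(\frac{\uep}{1+\epsi\uep}\nabla w_\epsi\Big).
\end{align*}
This is awkward because it involves second derivatives of $w_\epsi$. Instead, we use the first equation to express $\Delta w_\epsi=u_{\epsi t}+\nabla\cdot(\frac{\uep}{1+\epsi\uep}\nabla\vep)$. This rewrites the term as a combination of $v_{\epsi t}u_{\epsi t}$-type integrals and terms with $\nabla\uep\cdot\nabla w_\epsi$ multiplied by bounded factors.

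The $v_{\epsi t}u_{\epsi t}$ pieces are controlled by Young's inequality against $\intoTomega v_{\epsi t}^2$ and the weighted $u_{\epsi t}^2$. This requires that the accompanying weight contains a power $(\uep+\epsi)^{m-1}$, which it does since $\nabla w_\epsi$ carries that factor.

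\textbf{Estimate for $v_{\epsi t}$.} Testing the second equation against $v_{\epsi t}$ yields
\begin{align*}
\intoTomega v_{\epsi t}^2+\frac12\intomega|\nabla\vep(\cdot,T)|^2\le\frac12\intomega|\nabla v_0|^2+\frac12\intoTomega v_{\epsi t}^2+\frac12\intoTomega\uep^2\vep^2 .
\end{align*}
The last integral, $\intoTomega\uep^2\vep^2\le M\intoTomega\uep\vep^2$, is bounded uniformly in $T$ by Lemma~\ref{lem:st-bound-grad-uep}. Adding a suitable multiple of this inequality to the first one, absorbing all small terms, and using $|\nabla\uep^m|\le|\nabla w_\epsi|$ yields the claim with $C$ independent of $T$ and $\epsi$.

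The main obstacle is the $\nabla v_{\epsi t}$ cross term. Keeping its bound uniform in $T$ hinges on every leftover term being a space-time integral already controlled by Lemma~\ref{lem:st-bound-grad-uep} rather than growing linearly in $T$.
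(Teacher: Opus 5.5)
Your proposal has a genuine gap at the step you call the main obstacle. After integrating by parts in time and then in space, the cross term becomes $\int_0^T\!\int_\Omega v_{\varepsilon t}\,\nabla\cdot\big(\frac{u_\varepsilon}{1+\varepsilon u_\varepsilon}\nabla w_\varepsilon\big)$. Since
\[
\nabla\cdot\Big(\frac{u_\varepsilon}{1+\varepsilon u_\varepsilon}\nabla w_\varepsilon\Big)=\frac{\nabla u_\varepsilon\cdot\nabla w_\varepsilon}{(1+\varepsilon u_\varepsilon)^2}+\frac{u_\varepsilon}{1+\varepsilon u_\varepsilon}\Delta w_\varepsilon ,
\]
substituting $\Delta w_\varepsilon$ from the first equation only handles the second summand. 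The first summand leaves
\[
m\int_0^T\!\!\int_\Omega v_{\varepsilon t}\,\frac{(u_\varepsilon+\varepsilon)^{m-1}|\nabla u_\varepsilon|^2}{(1+\varepsilon u_\varepsilon)^2}.
\]
Here the factor multiplying $\nabla u_\varepsilon\cdot\nabla w_\varepsilon$ is $v_{\varepsilon t}$, which is not bounded. You control $v_{\varepsilon t}$ only in $L^2(\Omega\times(0,T))$, and $(u_\varepsilon+\varepsilon)^{m-1}|\nabla u_\varepsilon|^2$ only in $L^1(\Omega\times(0,T))$ via Lemma~\ref{lem:st-bound-grad-uep}. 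Such a product cannot be estimated. Young's inequality would need a bound on $\int_0^T\!\int_\Omega(u_\varepsilon+\varepsilon)^{2m-2}|\nabla u_\varepsilon|^4$, which nothing provides, and a further integration by parts just brings $\nabla v_{\varepsilon t}$ back. So the integration by parts in time trades a harmless term for one you cannot close. It could be rescued, e.g., by an extra $L^2(\Omega\times(0,T))$ estimate for $\nabla v_{\varepsilon t}$ from the time-differentiated second equation, followed by Cauchy--Schwarz against $\nabla w_\varepsilon$. That is a different argument, though, and your proposal does not contain it.

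The paper does not integrate by parts in the cross term at all. It keeps $-\int_\Omega\nabla\cdot\big(\frac{u_\varepsilon}{1+\varepsilon u_\varepsilon}\nabla v_\varepsilon\big)w_{\varepsilon t}$ and expands the divergence as $\frac{\nabla u_\varepsilon\cdot\nabla v_\varepsilon}{(1+\varepsilon u_\varepsilon)^2}+\frac{u_\varepsilon}{1+\varepsilon u_\varepsilon}\Delta v_\varepsilon$, with $w_{\varepsilon t}=m(u_\varepsilon+\varepsilon)^{m-1}u_{\varepsilon t}$. Young's inequality with $\|\nabla v_\varepsilon\|_{L^\infty(\Omega)}\le M$ and $\|u_\varepsilon\|_{L^\infty(\Omega)}\le M$ then bounds it by $\frac m2\int_\Omega(u_\varepsilon+\varepsilon)^{m-1}u_{\varepsilon t}^2+C\int_\Omega(u_\varepsilon+\varepsilon)^{m-1}|\nabla u_\varepsilon|^2+C\int_\Omega|\Delta v_\varepsilon|^2$.

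The ingredient you are missing for this is a $T$-independent bound on $\int_0^T\!\int_\Omega|\Delta v_\varepsilon|^2$. The paper gets it by testing the second equation against $-\Delta v_\varepsilon$. It also follows at once from your (correct) $v_{\varepsilon t}$ estimate via $\Delta v_\varepsilon=v_{\varepsilon t}+u_\varepsilon v_\varepsilon$.

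Separately, your workaround for $m\ge 3$ is invalid. For $p>2$ the factor $(u_\varepsilon+\varepsilon)^{p-2}$ is not bounded below uniformly in $\varepsilon$, so the weight $(u_\varepsilon+\varepsilon)^{m+p-3}$ does not dominate $(u_\varepsilon+\varepsilon)^{m-1}$ near $u_\varepsilon=0$. The paper's proof, however, also simply invokes Lemma~\ref{lem:st-bound-grad-uep} with $p=2$ at this point, so that restriction is shared rather than specific to your route.
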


\begin{bew}
Direct calculations drawing on the second equation of \eqref{eq:approx-prop} and utilizing integration by parts and Young's inequality show that
\begin{align*}
\frac{\intd}{\intd t}\intomega|\nabla\vep|^2+\intomega|\Delta\vep|^2\leq 2\intomega\uep^2\vep^2
\end{align*}
is valid for all $t>0$ and $\epsi\in(0,1)$. In view of \eqref{eq:ID-bounds}, \eqref{eq:approx-sol-win-est-M} and Lemma~\ref{lem:st-bound-grad-uep} we conclude, that there is $C_1>0$ such that
\begin{align}\label{eq:st-bound-time-deriv-uep-eq1}
\intoTomega|\Delta\vep|^2\leq \intomega |\nabla v_0|^2+2M\intoTomega\uep\vep^2\leq C_1
\end{align}
holds for all $T>0$ and all $\epsi\in(0,1)$. Next, we test the second equation of \eqref{eq:approx-prop} against $v_{\epsi t}$ and employ Young's inequality to find that
\begin{align*}
\intomega \big(v_{\epsi t}\big)^2=\intomega v_{\epsi t}\big(\Delta\vep-\vep\uep\big)\leq \frac{1}{2}\intomega \big(v_{\epsi t}\big)^2 +\intomega|\Delta\vep|^2+\intomega \uep^2\vep^2
\end{align*}
for all $t>0$ and $\epsi\in(0,1)$. Accordingly, making use of \eqref{eq:st-bound-time-deriv-uep-eq1} and Lemma~\ref{lem:st-bound-grad-uep}, we find $C_2>0$ satisfying
\begin{align}\label{eq:time-u-est-v}
\intoTomega \big(v_{\epsi t}\big)^2 \leq 2\intoTomega|\Delta\vep|^2+2M\intoTomega \uep\vep^2\leq C_2\quad\text{for all }T>0\text{ and }\epsi\in(0,1).
\end{align}
Similarly, testing the first equation of \eqref{eq:approx-prop} against $\big(\uep^m\big)_t$ yields
\begin{align}\label{eq:time-u-est-0}
\intomega u_{\epsi t}\big((\uep+\epsi)^m\big)_{t}\nonumber
=\ &\intomega \nabla\cdot\big(\nabla(\uep+\epsi)^m\big)\big((\uep+\epsi)^m\big)_t\nonumber\\
&-\intomega\frac{\nabla\uep\cdot\nabla\vep}{1+\epsi\uep}\big((\uep+\epsi)^m\big)_t+\intomega\frac{\epsi\uep\big(\nabla\uep\cdot\nabla\vep\big)}{(1+\epsi\uep)^2}\big((\uep+\epsi)^m\big)_t\nonumber\\
&-\intomega \frac{\uep}{1+\epsi\uep}\Delta\vep\big((\uep+\epsi)^m\big)_t=:I_1+I_2+I_3+I_4
\end{align}
for all $t>0$ and $\epsi\in(0,1)$. To treat $I_1$, we integrate by parts to obtain
\begin{align}\label{eq:time-u-est-1}
I_1=-\intomega\nabla(\uep+\epsi)^m\cdot\big(\nabla(\uep+\epsi)^m\big)_t=-\frac{1}{2}\frac{\intd}{\intd t}\intomega \big|\nabla(\uep+\epsi)^m\big|^2
\end{align}
for all $t>0$ and $\epsi\in(0,1)$. In $I_2$ and $I_3$ we write $((\uep+\epsi)^m)_t=m(\uep+\epsi)^{m-1} u_{\epsi t}$ and employ Young's inequality to estimate
\begin{align*}
I_2+I_3\leq \frac{m}{4}\intomega (\uep+\epsi)^{m-1}(u_{\epsi t})^2&+2m\intomega(\uep+\epsi)^{m-1}|\nabla\uep|^2|\nabla\vep|^2\\
&+4m\intomega(\uep+\epsi)^{m-1}\frac{\epsi^2\uep^2}{(1+\epsi\uep)^2}|\nabla\uep|^2|\nabla\vep|^2
\end{align*}
for all $t>0$ and $\epsi\in(0,1)$. Since \eqref{eq:approx-sol-win-est-M} implies $\|\vep\|_{\W[1,\infty]}\leq M$ for all $t>0$ and $\epsi\in(0,1)$, we immediately obtain $\|\nabla\vep\|_{\Lo[\infty]}\leq M$ for all $t>0$ and $\epsi\in(0,1)$ and hence find that
\begin{align}\label{eq:time-u-est-2}
I_2+I_3\leq \frac{m}{4}\intomega(\uep+\epsi)^{m-1}(u_{\epsi t})^2+ 4m M^2\intomega(\uep+\epsi)^{m-1}|\nabla\uep|^2\quad\text{for all }t>0\text{ and }\epsi\in(0,1).
\end{align}
Similarly, an application of Young's inequality to $I_4$ yields
\begin{align*}
I_4\leq \frac{m}{4}\intomega(\uep+\epsi)^{m-1}\big(u_{\epsi t})^2+ m\intomega\frac{(\uep+\epsi)^{m-1}\uep^2}{(1+\epsi\uep)^2}|\Delta\vep|^2\quad\text{for all }t>0\text{ and }\epsi\in(0,1),
\end{align*}
and drawing on \eqref{eq:approx-sol-win-est-M} again to estimate $\|\uep\|_{\Lo[\infty]}\leq M$ for all $t>0$ and $\epsi\in(0,1)$, entails
\begin{align}\label{eq:time-u-est-3}
I_4\leq \frac{m}{4}\intomega(\uep+\epsi)^{m-1}\big(u_{\epsi t})^2+m (M+1)^{m+1}\intomega|\Delta\vep|^2\quad\text{for all }t>0\text{ and }\epsi\in(0,1).
\end{align} 
Combining \eqref{eq:time-u-est-0}--\eqref{eq:time-u-est-3} and integrating from $0$ to $T$, we obtain
\begin{align*}
&\frac{m}{2}\intoTomega(\uep+\epsi)^{m-1}\big(u_{\epsi t})^2+\frac{1}{2}\intomega\big|\nabla\big(\uep(\cdot,T)+\epsi\big)^m\big|^2\nonumber\\
\leq\ &4mM^2\intoTomega(\uep+\epsi)^{m-1}|\nabla\uep|^2+m(M+1)^m\intoTomega|\Delta\vep|^2+\frac{1}{2}\intomega\big|\nabla (u_0+\epsi)^m\big|^2
\end{align*}
for all $T>0$ and $\epsi\in(0,1)$. In view of Lemma~\ref{lem:st-bound-grad-uep} and the bounds featured in \eqref{eq:time-u-est-v} and \eqref{eq:ID-bounds}, we readily conclude that there is $C_3>0$ such that
\begin{align}\label{eq:time-u-est-4}
\intomega\big|\nabla\uep^m(\cdot,T)\big|^2+\intoTomega(\uep+\epsi)^{m-1}\big(u_{\epsi t})^2\leq C_3\quad\text{for all }T>0\text{ and }\epsi\in(0,1),
\end{align}
where we have used that $\intomega|\nabla\uep^m|^2\leq \intomega|\nabla(\uep+\epsi)^m|^2$. The inequalities \eqref{eq:time-u-est-4} and \eqref{eq:time-u-est-v} together prove the assertion.
\end{bew}

The lemmas above at hand, we can now draw on an Aubin–-Lions–-Simon compactness lemma to conclude the following.

\begin{lemma}\label{lem:simon}
Up to a re-definition on a set of measure zero the functions $u,v$ provided by Lemma~\ref{lem:approx-sol-prop} satisfy
\begin{align*}
u^m\in\CSp{0}{[0,\infty);\Lo[2]},\quad v\in\CSp{0}{[0,\infty);\Lo[2]}.
\end{align*}
\end{lemma}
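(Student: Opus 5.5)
We will apply the Aubin--Lions--Simon lemma on each finite interval $[0,T]$ to the families $(\uep+\epsi)^m$ and $\vep$. The space chain is $\W[1,2]\hookrightarrow\hookrightarrow\Lo[2]\hookrightarrow\Lo[2]$, and the target is relative compactness in $\CSp{0}{[0,T];\Lo[2]}$. Simon's version then asks for bounds in $\LSp{\infty}{(0,T);\W[1,2]}$ and in $\LSp{2}{(0,T);\Lo[2]}$ for the time derivatives.

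\textbf{The $u$-component.} Lemma~\ref{lem:bound-time-deriv-uep}, read with $(\uep+\epsi)^m$ in place of $\uep^m$ (this is what its proof actually controls), gives $\sup_{t}\intomega|\nabla(\uep+\epsi)^m|^2\le C$. Together with $\|\uep\|_{\Lo[\infty]}\le M$ from \eqref{eq:approx-sol-win-est-M}, this yields a uniform bound for $(\uep+\epsi)^m$ in $\LSp{\infty}{(0,\infty);\W[1,2]}$. For the time derivative we write $\big((\uep+\epsi)^m\big)_t=m(\uep+\epsi)^{m-1}u_{\epsi t}$, so that
\begin{align*}
\intoTomega\Big|\big((\uep+\epsi)^m\big)_t\Big|^2\le m^2(M+1)^{m-1}\intoTomega(\uep+\epsi)^{m-1}(u_{\epsi t})^2\le C,
\end{align*}
again by Lemma~\ref{lem:bound-time-deriv-uep}. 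The essential point is the weight $(\uep+\epsi)^{m-1}$ in that lemma: it bounds $\partial_t$ of the $m$-th power without any lower bound on $\uep$.

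\textbf{The $v$-component.} Here \eqref{eq:approx-sol-win-est-M} gives a uniform bound for $\vep$ in $\LSp{\infty}{(0,\infty);\W[1,\infty]}$, and Lemma~\ref{lem:bound-time-deriv-uep} bounds $v_{\epsi t}$ in $\LSp{2}{\Omega\times(0,T)}$.

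\textbf{Compactness and identification of limits.} Simon's lemma makes both families relatively compact in $\CSp{0}{[0,T];\Lo[2]}$. A diagonal argument over $T\in\N$ extracts a further subsequence of $(\epsi_j)$ along which $(\uep+\epsi)^m\to w$ and $\vep\to z$ in $\CSp{0}{[0,T];\Lo[2]}$ for every $T$. Since $(\uep,\vep)\to(u,v)$ a.e. and $(\uep+\epsi)^m\to u^m$ a.e. along that sequence, we get $w=u^m$ and $z=v$ a.e. in $\Omega\times(0,\infty)$. This is where the ``re-definition on a null set'' in the statement enters.

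\textbf{Continuity at $t=0$.} Each approximate solution is continuous on $\bomega\times[0,\infty)$ with $\uep(\cdot,0)=u_0$, and $(u_0+\epsi)^m\to u_0^m$ uniformly. The uniform-in-time convergence therefore gives $w(0)=u_0^m$ and $z(0)=v_0$, so the continuous representatives indeed lie in $\CSp{0}{[0,\infty);\Lo[2]}$ with the correct initial values.

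\textbf{Main obstacle.} The delicate point is the time derivative of the $u$-part. Because of the degeneracy, $u_{\epsi t}$ itself is not controlled in any Lebesgue space, so one has to work with $(\uep+\epsi)^m$ rather than $\uep$ and rely on the weighted estimate. A secondary check is that the gradient bound is uniform in $T$, so that the compactness holds on every interval. The final identification of $w$ with $u^m$ via a.e. convergence is routine.
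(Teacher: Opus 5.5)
Your proposal is correct and follows the paper's route: Simon's compactness lemma on each $[0,T]$, using the $L^\infty((0,T);W^{1,2}(\Omega))$ bound and the weighted time-derivative bound from Lemma~\ref{lem:bound-time-deriv-uep}, and the same for $\vep$. Using $(\uep+\epsi)^m$ where the paper uses $\uep^m$ makes no difference since $\uep^{m-1}\le(\uep+\epsi)^{m-1}$, and your diagonal extraction, identification of the limit via a.e.\ convergence, and check of the initial values just make explicit what the paper's short proof leaves implicit.
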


\begin{bew}
Let $T>0$ be arbitrary. Then, according to Lemma~\ref{lem:bound-time-deriv-uep}, we find that $$\big(\uep^m\big)_{\epsi\in(0,1)}\ \text{ is bounded in }\ \LSp{\infty}{(0,T);\W[1,2]}$$ and $$\big(u_{\epsi t}^m\big)_{\epsi\in(0,1)}\ \text{ is bounded in }\ \LSp{2}{(0,T);\Lo[2]}.$$ Drawing on e.g. \cite[Theorem 1]{simonCompactSetsSpace1987}, we conclude that $$u^m\in\CSp{0}{[0,T];\Lo[2]}.$$ Combining the boundedness $(v_{\epsi t})_{\epsi\in(0,1)}$ in $\LSp{2}{(0,T);\Lo[2]}$ established in Lemma~\ref{lem:bound-time-deriv-uep} with \eqref{eq:approx-prop}, we also arrive at the corresponding statement for $v$.
\end{bew}

\begin{proof}[\textbf{Proof of Theorem~\ref{theo2}:}]
With $(u,v)$ denoting the global weak solution of \eqref{eq:CT-prototype-choice} on $\Omega\times(0,\infty)$ provided by Lemma~\ref{lem:approx-sol-prop}, we set $T_0:=\infty$ and notice that for $T\in(0,T_0)$ we have
\begin{align*}
\big\|u(\cdot,t)-u(\cdot,s)\big\|_{\Lo[2]}^2\leq |\Omega|^{1-\frac{1}{m}}\big\|u^m(\cdot,t)-u^m(\cdot,s)\big\|_{\Lo[2]}^\frac{2}{m}\quad\text{for all }s,t\in[0,T],
\end{align*}
due to $m>1$ and $u\geq0$. Consequently, we conclude directly from Lemma~\ref{lem:simon} that both
\begin{align*}
u&\in C_{loc}^0\big([0,\infty);\LSp{2}{\Omega}\!\big)
,\quad\text{and}\quad
v\in C_{loc}^0\big([0,\infty);\LSp{2}{\Omega}\!\big)
\end{align*}
are satisfied. Combining these properties with the bounds established in \eqref{eq:w-sol-win-est-M}, we find that Theorem~\ref{theo1} applies to $(u,v)$, which in turn ensures that the claims of Theorem~\ref{theo2} are true.
\end{proof}

\section*{Acknowledgements}
The author acknowledges support of the {\em Deutsche~Forschungsgemeinschaft} (Project No.~462888149).

\footnotesize{
\setlength{\bibsep}{3pt plus 0.5ex}

}

\end{document}